\documentclass[12pt,twoside,reqno]{amsart}
\linespread{1.05}
\usepackage[colorlinks=true,citecolor=blue]{hyperref}
\usepackage{mathptmx, amsmath, amssymb, amsfonts, amsthm, mathptmx, enumerate, color}
\setlength{\textheight}{23cm}
\setlength{\textwidth}{16cm}
\setlength{\oddsidemargin}{0cm}
\setlength{\evensidemargin}{0cm}
\setlength{\topmargin}{0cm}
\usepackage{graphicx}
\usepackage{epstopdf}

\def\R{\mathbb R}

\def\gr{{\rm gr }}
\def\int{{\rm int\,}}

\def\zer{{\rm zer}}

\newtheorem{theorem}{Theorem}[section]

\newtheorem{lemma}{Lemma}[section]

\theoremstyle{definition}
\newtheorem{definition}{Definition}[section]

\newtheorem{remark}{Remark}[section]

\newtheorem{assumption}{Assumption}[section]
\numberwithin{equation}{section}
\newtheorem{algorithm}{Algorithm}
\begin{document}
	\setcounter{page}{1}

	\vspace*{1.0cm}
	\title[FRABA with double inertial effects for non-monotone inclusion problems]
	{A forward-reflected-anchored-backward splitting algorithm with double inertial effects for solving non-monotone inclusion problems}
	\author[N.V. Tran]{Nam V. Tran$^{1,*}$}
	\maketitle
	\vspace*{-0.6cm}

	\begin{center}
		{\footnotesize  {\it							
				$^1$Faculty of Applied Sciences, HCMC University of Technology and Education, Ho Chi Minh City, Vietnam; }}

	\end{center}

	\vskip 4mm {\small\noindent {\bf Abstract.}
In this paper, we study inclusion problems where the involved operators may not be monotone in the classical sense. Specifically, we assume the operators to be generalized monotone, a weaker notion than classical monotonicity. This allows us to extend the applicability of our results to a broader class of operators. We apply the two-step inertial forward-reflected-anchored-backward splitting algorithm proposed in \cite{CHIN} to these non-monotone inclusion problems. We establish the strong convergence of the sequence generated by the algorithm and demonstrate its applicability to other optimization problems, including Constrained Optimization Problems, Mixed Variational Inequalities, and Variational Inequalities.

		\noindent {\bf Keywords.}
		Forward-reflected-anchored-backward algorithm, Generalized monotone, inclusion, Maximality, two-step inertial. 
	
	\noindent {\bf [MSC Classification]} {47J20; 47J30;49J40; 49J53; 49M37} }

	\renewcommand{\thefootnote}{}
	\footnotetext{ $^*$Corresponding author.
		\par
		E-mail addresses: namtv@hcmute.edu.vn (Nam V. Tran).
		\par
	}
	\section{Introduction} 
	Let $H$ be a real Hilbert space with the inner product denoted by $\langle \cdot,\cdot \rangle$ and the norm denoted by $\|\cdot\|$. 
	In this paper, we study the following inclusion problem: Find \(u^*\in K\) such that 
	\begin{equation}\label{inc1}
		0\in F(u^*) + G(u^*)
	\end{equation}
	where $F: H \longrightarrow 2^{H}$ is a set-valued mapping and $G:H \longrightarrow H$ is a singleton mapping, $K$ is a nonempty, closed subset of $H$.  In this paper, we will denote by $\mbox{\rm zer } F$  the \emph{set of zero points} of an operator \(F\).
    
    This problem serves as a broad mathematical model that encompasses numerous well-known problems, including optimization problems, variational inequalities, saddle point problems, Nash equilibrium problems in noncooperative games, and fixed point problems. Many of these can be reformulated as special cases of \eqref{inc1}; see, for instance, \cite{BlumOettli94, Cav} and references therein.
    
	
	For example,  consider a variational inequality problem of finding  \(u^*\in K\) such that $\langle T(u^*), u-u^*\rangle \geq 0,$ for all $ u\in K$, where \(K\) is a nonempty, closed, convex subset of $H$ and $T: H \longrightarrow H$. This problem can be rewritten as the inclusion problem $0\in F(u)$ where 
	\begin{equation*}
		F(x)=\begin{cases} 
			T(u)+N_K(u) &\mbox{ if } u\in K\\
			\emptyset &\mbox{ if } x\neq  K,
		\end{cases}
	\end{equation*} 
	with  $N_K(u)$  the normal cone to $K$ at $u$.
	
	In addition, a constrained minimization problem can also be formulated as an inclusion problem. Indeed, given a proper  convex  function $g: H \longrightarrow (-\infty, +\infty]$  and $K$ be a nonempty closed, convex subset of $H$, then $u^*\in K$ is a solution to the constrained optimization problem (COP): $\min_{u\in K} g(u)$ only if it is a solution to the inclusion problem 
	$0\in  \partial g (u)+N_K(u)$, (see, for instance, \cite{ROCK}).
	
	Many methods have been developed to solve inclusion problems of the form \eqref{inc1}, see, for instance, \cite{BauschkeCombettes}. The forward-backward splitting method, introduced in seminal works by Passty \cite{PAS} and Lions and Mercier \cite{MER}, has become a standard approach for such problems \cite{ABBAS, BauschkeCombettes, BSV, CSE, MAL, PARS, TSENG}. For more details, the following forward-backward splitting method was introduced in \cite{MER}.
    \begin{equation*}
        u_{k+1}=(id+\gamma F)^{-1}(u_k-\gamma Gu_k), \quad k\geq 1.
    \end{equation*}
    Weak convergence was established under strict conditions on $G$ while achieving strong convergence required even more restrictive assumptions \cite{TAK, WANG2018}.
	
To address these limitations, Tseng \cite{TSENG} introduced an improved algorithm in 2000:
\begin{equation*}
    \left\{\begin{array}{lll}
        v_k&= & (id+\gamma F)^{-1}(u_k-\gamma Bu_k),\\
        u_{k+1}&= & v_k-\gamma Gv_k+\gamma Gu_k.
    \end{array}\right. 
\end{equation*}
However, this method incurs additional computational costs due to requiring two forward evaluations of $G$. 
To overcome this disadvantage in \cite{MAL} Malitsky and Tam employed a reflection technique to propose the following update:
\begin{equation*}
    u_{k+1}=(id+\gamma F)^{-1}(u_k-2\gamma G u_k+\gamma Gu_{k-1}), \quad  \gamma \in \left(0, \frac{1}{2L} \right). 
\end{equation*}

Originating from the discretization of the heavy ball method, the inertial technique has become popular in algorithm studies due to its role in accelerating convergence. For example, in \cite{BING}, Bing and Cho introduced the following one-step inertial viscosity-type forward-backward-forward splitting algorithm:
\begin{equation*}
   \left\{ \begin{array}{ccc}
        t_k&=&u_k+\theta_k(u_k-u_{k-1}  \\
        v_k&=&(id+\gamma F)^{-1}(t_k-\gamma Gt_k),\\  
        w_k&=&v_k-\gamma G v_k+\gamma Gu_k,\\
        u_{k+1}&=&\alpha_kfu_k+(1-\alpha_k)w_k.
    \end{array}\right. 
\end{equation*}
Strong convergence was attained for monotone operators.

More recently, two-step inertial techniques have been successfully incorporated into various algorithms \cite{CHIN, Liang}, demonstrating significant improvements in convergence rates \cite{Liang}.
	
In addition, up to now, classical assumptions of monotonicity in inclusion problems have been deeply ingrained \cite{BC_SICON, 22, BCJMAA}. Relaxing these conditions is challenging since fundamental results may no longer hold. For instance, if $(G+F)$ lacks strong monotonicity, the inclusion $0\in (G+F)(x)$ may have no solution. Consequently, the number of algorithms for non-monotone inclusion problems is very limited. Moreover, the monotonicity assumption may restrict the applicability of the results, as operators in real-world applications are often not monotone. Hence, reducing this assumption is a crucial aspect of developing algorithms for these inclusion problems, which serves as the motivation for this research.

In this paper, we extend the concept of monotonicity by allowing a generalized monotonicity framework, where operators may have a negative modulus of monotonicity. Additionally, operators are not required to be maximal. This broader perspective enables the study of a wider class of operators beyond the traditional monotone setting.

The remainder of the paper is structured as follows:

	Section \ref{Preliminaries} revisits fundamental definitions and concepts, and presents several technical lemmas. Specifically, we provide some characterizations for an operator to be maximal generalized monotone and for the sum of two generalized monotone operators to be maximal.

    Section \ref{sec3} presents the main results, including an analysis of the strong convergence of the two-step inertial forward-reflected-anchored-backward (FRAB) splitting algorithm.
Section \ref{sec5} discusses some applications of the algorithm to  COPs, MVIs, and VIs.
Section \ref{sec6} concludes with final remarks.

	\section{Preliminaries} \label{Preliminaries}
	
In this section, we review essential definitions that will be useful in the subsequent discussion.

	\subsection{Some notions on convex analysis} 
	
	Let $f: H  \to (-\infty, +\infty]$ be a convex and lower semicontinuous (l.s.c.) function. Its  \emph{domain} is defined as $\mbox{\rm dom }f=\{x\in H: f(x)<+\infty\}$, and  $f$ is said to be \emph{proper} if $\mbox{\rm{dom}} f\neq \emptyset.$

	A proper, convex and lower semicontinuous function  $f: H  \to (-\infty, +\infty]$ is   \emph{subdifferentiable} at $u$ if its \emph{subdifferential}  at  $u$,  given by $$
	\partial f(u)= \{ w \in H: \, f(v) -  f(u) \geq  \left\langle w, v - u \right\rangle  \, \forall v \in H \}  
	$$
    is non-empty. 
Any  $w \in \partial f(u)$ is called a \emph{subgradient} of $f$ at $u$.

For a nonempty, closed, convex subset $K$ of $H$, the \emph{normal cone} at  $u\in K$, $N_K(u)$,  is defined as
	$$ N_K(u)=\left\{w\in H:\left\langle w, u-v\right\rangle \ge 0, \forall v\in K\right\}, $$
	and $N_K(u) = \emptyset$ if $u \not\in K$. Recall that The \emph{indicator function} of $K$, 
	$$i_K(u) = \begin{cases}
		0 & \mbox{ if } u\in K\\
		+\infty & \mbox{  otherwise} 
	\end{cases}$$ satisfies 	$\partial i_K(u) = N_K(u)$ for all $u\in H$.\\
A fundamental tool in inclusion problems is the \emph{metric projection}, defined for any $u\in H$ as,
	$$
	P_K (u)=\arg\min\left\{\left\|v-u\right\|:v\in K\right\}.
	$$
	Note that when $K$ is nonempty, closed, and convex, $P_K (u)$ exists and is unique. 

We now review some useful identities needed for the convergence analysis in the sequel.

\begin{lemma} \cite{CHIN}\label{simple}
	Let $x,y,z \in \R^n$ and $a,b,\beta \in \mathbb{R}$. Then
	\begin{itemize}
		\item[(a)] We have that 	\begin{eqnarray}\label{eq2.2} 
			&&\|(1+a)x-(a-b)y-bz\|^2 \notag \\
			&=& (1+a)\|x\|^2-(a-b)\|y\|^2-b\|z\|^2 +(1+a)(a-b)\|x-y\|^2 \notag \\
			&&+b(1+a)\|x-z\|^2-b(a-b)\|y-z\|^2.
		\end{eqnarray}
		\item[(b)] The following identity holds
        \begin{eqnarray}\label{eq2.1}
			\langle x - z, y - x\rangle = \frac{1}{2}\|z - y\|^2 - \frac{1}{2}\|x - z\|^2 - \frac{1}{2}\|y-x\|^2.
		\end{eqnarray}
		\item[(c)]  It holds	that \begin{eqnarray*}
			\|\beta x + (1-\beta)y\|^2 = \beta\|x\|^2 + (1-\beta)\|y\|^2 - \beta(1-\beta)\|x - y\|^2.
		\end{eqnarray*}
	\end{itemize}
\end{lemma}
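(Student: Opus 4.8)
The plan is to treat all three identities as direct consequences of the bilinearity and symmetry of the inner product on $\R^n$, expanding each side and matching terms. Parts (b) and (c) are the genuinely elementary cases, so I would dispatch them first; part (a), carrying the most terms, is where the bookkeeping concentrates, and I would handle it by a structural observation rather than brute force.

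For (b), I would expand the left-hand side as $\langle x-z,\,y-x\rangle = \langle x,y\rangle - \|x\|^2 - \langle z,y\rangle + \langle z,x\rangle$, then expand each squared norm on the right via $\|u-v\|^2 = \|u\|^2 - 2\langle u,v\rangle + \|v\|^2$ and collect. The $\|x\|^2,\|y\|^2,\|z\|^2$ contributions cancel in pairs up to a single surviving $-\|x\|^2$, while the cross terms reassemble into exactly $\langle x,y\rangle - \langle z,y\rangle + \langle z,x\rangle$, matching the left-hand side. For (c), I would expand $\|\beta x + (1-\beta)y\|^2 = \beta^2\|x\|^2 + 2\beta(1-\beta)\langle x,y\rangle + (1-\beta)^2\|y\|^2$ and verify that the right-hand side, after expanding $-\beta(1-\beta)\|x-y\|^2$, produces the same three coefficients; this is immediate once one notes $\beta - \beta(1-\beta) = \beta^2$ and symmetrically for the $y$ term.

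The key observation for (a) is that the three coefficients $1+a$, $-(a-b)$, and $-b$ sum to $1$. Hence the left-hand side is the squared norm of an affine (barycentric) combination $\sum_i \lambda_i v_i$ with $(v_1,v_2,v_3)=(x,y,z)$ and $(\lambda_1,\lambda_2,\lambda_3)=(1+a,\,b-a,\,-b)$, and the claimed right-hand side is precisely the instance of the general identity
\begin{equation*}
\left\|\sum_i \lambda_i v_i\right\|^2 = \sum_i \lambda_i\|v_i\|^2 - \sum_{i<j}\lambda_i\lambda_j\|v_i - v_j\|^2, \qquad \sum_i \lambda_i = 1,
\end{equation*}
read off with these data. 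I would therefore prove this general identity once: expanding $\|v_i-v_j\|^2$ and using $\sum_j \lambda_j = 1$ turns $\sum_{i<j}\lambda_i\lambda_j(\|v_i\|^2+\|v_j\|^2)$ into $\sum_i\lambda_i\|v_i\|^2 - \sum_i\lambda_i^2\|v_i\|^2$, after which the diagonal and off-diagonal terms recombine into $\sum_{i,j}\lambda_i\lambda_j\langle v_i,v_j\rangle = \|\sum_i\lambda_i v_i\|^2$. Substituting $\lambda_1\lambda_2 = -(1+a)(a-b)$, $\lambda_1\lambda_3 = -b(1+a)$, and $\lambda_2\lambda_3 = b(a-b)$ then reproduces \eqref{eq2.2} term by term.

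None of the three parts presents a genuine obstacle; the only real risk is sign and bookkeeping errors in (a), which the barycentric reformulation is designed to contain. If one preferred to avoid invoking the general identity, (a) could equally be obtained by a direct expansion of both sides, but I expect the affine-combination route to be the cleanest to verify and the least error-prone.
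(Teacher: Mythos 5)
Your proposal is correct in all three parts: the expansions in (b) and (c) check out, and the barycentric identity you prove, namely $\bigl\|\sum_i \lambda_i v_i\bigr\|^2 = \sum_i \lambda_i\|v_i\|^2 - \sum_{i<j}\lambda_i\lambda_j\|v_i-v_j\|^2$ whenever $\sum_i\lambda_i=1$, instantiated with $(\lambda_1,\lambda_2,\lambda_3)=(1+a,\,b-a,\,-b)$, reproduces (a) with the correct signs (I verified $-\lambda_1\lambda_2=(1+a)(a-b)$, $-\lambda_1\lambda_3=b(1+a)$, $-\lambda_2\lambda_3=-b(a-b)$). Note, however, that the paper itself offers no proof of this lemma at all: it is stated with a citation to the reference where the algorithm originates, so there is no ``paper's approach'' to compare against; your write-up supplies an argument the paper leaves implicit. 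Your route is arguably better than the term-by-term expansion one would expect in the cited source, since the single general identity with unit coefficient sum does all the work: part (c) is exactly its two-point case with $(\lambda_1,\lambda_2)=(\beta,1-\beta)$, and (a) is its three-point case, so only part (b) (a rearranged law-of-cosines / polarization identity) needs a separate two-line expansion. The one thing worth making explicit if this were to be included is the observation you use silently, that $\sum_{j\neq i}\lambda_j = 1-\lambda_i$, which is where the affine constraint enters; otherwise the argument is complete and the bookkeeping risk you flag is genuinely contained by the reformulation.
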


Next, we recall some results related to the convergence properties of sequences, which will be applied in the convergence analysis

\begin{lemma}\cite{SAE}\label{lm2.1} 
    Let $\{s_k\}$  be a sequence  with $s_k\geq 0$ for all $k$, $\{\lambda_k\}$ be a real sequence with $\lambda_k\in (0,1)$ for all $k$ such that $\sum\limits_{k=1}^\infty \lambda_k=\infty$, and  $\{a_k\}$ be a real sequence satisfying 
    \begin{equation*}
        s_{k+1}\leq (1-\lambda_k)s_k+\lambda_ka_k, \quad \forall k\geq 1.
    \end{equation*}
    Assume further that $\limsup\limits_{i\to\infty} a_{k_i}\leq 0$ for each subsequence $\{a_{k_i}\}$ of $\{a_k\}$ satisfying $\liminf\limits_{i\to\infty} (a_{k_i+1}-a_{k_i})\geq 0.$ Then $\lim\limits_{k\to\infty} a_k=0.$
\end{lemma}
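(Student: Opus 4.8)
The result is the well-known Saejung--Yotkaew lemma, and the target is to show $\lim_{k\to\infty}s_k=0$; here the subsequence hypothesis is to be read for subsequences $\{s_{k_i}\}$ of $\{s_k\}$ satisfying $\liminf_{i\to\infty}(s_{k_i+1}-s_{k_i})\ge 0$. Since $s_k\ge 0$, it suffices to prove $\limsup_{k\to\infty}s_k\le 0$. The plan is to split the argument according to the eventual monotonicity of $\{s_k\}$, using the index-selection technique of Maing\'{e} in the non-monotone case.

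First I would treat the case where $\{s_k\}$ is eventually non-increasing. Then it is bounded below by $0$, hence converges to some $L\ge 0$, and $s_{k+1}-s_k\to 0$, so the full tail is an admissible subsequence and the hypothesis yields $\limsup_{k\to\infty}a_k\le 0$. Rewriting the recursion as $\lambda_k(s_k-a_k)\le s_k-s_{k+1}$ and summing telescopes the right-hand side to a bounded quantity; if $L>0$, then $\liminf_{k\to\infty}(s_k-a_k)\ge L>0$, so the left-hand side is bounded below by a positive multiple of $\sum_{k}\lambda_k=\infty$, a contradiction. Hence $L=0$.

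Next I would treat the case where $\{s_k\}$ is not eventually monotone, i.e. $s_k<s_{k+1}$ for infinitely many $k$. Here I would introduce $\tau(n)=\max\{\,k\le n:s_k<s_{k+1}\,\}$ for large $n$; Maing\'{e}'s construction gives $\tau(n)\to\infty$, $s_{\tau(n)}\le s_{\tau(n)+1}$, and $s_n\le s_{\tau(n)+1}$. Along $\{\tau(n)\}$ one has $\liminf_{n\to\infty}(s_{\tau(n)+1}-s_{\tau(n)})\ge 0$, so the hypothesis gives $\limsup_{n\to\infty}a_{\tau(n)}\le 0$. Feeding $s_{\tau(n)}\le s_{\tau(n)+1}$ into the recursion and cancelling $\lambda_{\tau(n)}>0$ yields $s_{\tau(n)}\le a_{\tau(n)}$, whence $s_{\tau(n)}\to 0$ and then $s_{\tau(n)+1}\to 0$; the estimate $s_n\le s_{\tau(n)+1}$ then forces $\limsup_{n\to\infty}s_n\le 0$.

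The delicate point is not any single estimate but the organization of the argument: the hypothesis only controls $\{a_k\}$ along subsequences on which $\{s_k\}$ is asymptotically non-decreasing, so the whole proof hinges on manufacturing such subsequences, which is exactly what the monotone tail in the first case and Maing\'{e}'s indices in the second case provide. A secondary subtlety is that $\lambda_k$ may tend to $0$, which is why the monotone case cannot be closed by passing to the limit directly in the recursion and instead requires the summation argument exploiting $\sum_{k}\lambda_k=\infty$.
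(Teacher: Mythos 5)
Your proposal cannot be compared against an internal argument, because the paper contains none: Lemma~\ref{lm2.1} is imported from \cite{SAE} without proof. Judged as a reconstruction of the cited result, your proof is correct and is essentially the original Saejung--Yotkaew argument. You also correctly recognized and repaired the misprint in the statement: as printed, both the admissibility condition and the conclusion are attached to $\{a_k\}$, whereas the intended (and true) statement imposes $\liminf_{i\to\infty}(s_{k_i+1}-s_{k_i})\ge 0$ on subsequences of $\{s_k\}$ and concludes $\lim_{k\to\infty}s_k=0$; this is also exactly how the paper later uses the lemma, deducing $\lim_{k\to\infty}s_k=0$ from \eqref{eq3.27} in the proof of its main theorem. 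Your two-case structure --- an eventually non-increasing tail, closed by telescoping $\lambda_k(s_k-a_k)\le s_k-s_{k+1}$ against $\sum_k\lambda_k=\infty$, versus the non-monotone case handled by Maing\'e's index map $\tau(n)=\max\{k\le n:\ s_k<s_{k+1}\}$ with the key inequalities $s_{\tau(n)}\le a_{\tau(n)}$ and $s_n\le s_{\tau(n)+1}$ --- is precisely the mechanism of \cite{SAE}, which in turn rests on Maing\'e's technique (the paper cites the relevant source as \cite{MAI}). Two small points would tighten the write-up, both routine: (i) $n\mapsto\tau(n)$ is non-decreasing but generally not injective, so to invoke the hypothesis literally you should pass to the strictly increasing enumeration of the distinct values of $\tau(n)$, which is a genuine subsequence and does not change the relevant limits superior; (ii) the step ``$s_{\tau(n)}\to 0$ and then $s_{\tau(n)+1}\to 0$'' deserves one line: from $0\le s_{\tau(n)}\le a_{\tau(n)}$ and $\limsup_n a_{\tau(n)}\le 0$ one gets both $s_{\tau(n)}\to 0$ and $a_{\tau(n)}\to 0$, whence $s_{\tau(n)+1}\le(1-\lambda_{\tau(n)})s_{\tau(n)}+\lambda_{\tau(n)}a_{\tau(n)}\le\max\bigl\{s_{\tau(n)},a_{\tau(n)}\bigr\}\to 0$. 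With these two remarks added, your proof is a complete and faithful substitute for the omitted one.
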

\begin{lemma}\cite{MAI} \label{lm2.3} 
    Let $\{s_k\}$ and $\{a_k\}$ be two nonnegative real sequences, $\{\lambda_k\}$ be a sequence in $(0,1)$, and $\{b_k\}$ be a real sequence satisfying 
    \begin{equation*}
        s_{k+1}\leq (1-\lambda_k)s_k+a_k+b_k, \quad \forall k\geq 1, 
    \end{equation*}
    and $\sum\limits_{k=1}^\infty b_k<\infty$, $a_k\leq \lambda_k C$ for some $C\geq 0.$ Then $\{a_k\}$ is bounded. 
    
\end{lemma}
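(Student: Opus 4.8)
The conclusion as literally stated is immediate, since $0\le a_k\le\lambda_k C\le C$ already bounds $\{a_k\}$ by $C$ without any appeal to the recurrence; the substantive statement, and the one invoked in the convergence analysis, is the boundedness of $\{s_k\}$, which I would prove as follows. The plan is to absorb the signed perturbation $b_k$ into a shifted sequence so that the recursion collapses to a pure contraction toward a constant. First I would use $a_k\le\lambda_k C$ to rewrite the hypothesis as $s_{k+1}\le(1-\lambda_k)s_k+\lambda_k C+b_k$. Since the series $\sum_k b_k$ converges, the tails $r_k:=\sum_{j\ge k}b_j$ are well defined, tend to $0$, and hence form a bounded sequence; set $\rho:=\sup_k|r_k|<\infty$. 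Introducing $v_k:=s_k+r_k$ and using the telescoping identity $r_k=b_k+r_{k+1}$ to cancel $b_k$, I would obtain
\[
v_{k+1}\le(1-\lambda_k)v_k+\lambda_k(C+r_k)\le(1-\lambda_k)v_k+\lambda_k C',\qquad C':=C+\rho.
\]

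From this a one-line induction yields $v_k\le\max\{v_1,C'\}$ for all $k$: whenever $v_k$ lies below this bound, so does the convex combination $(1-\lambda_k)v_k+\lambda_k C'$, because $C'$ lies below it as well and $1-\lambda_k\in(0,1)$. Undoing the shift through $s_k=v_k-r_k\le\max\{v_1,C'\}+\rho$, and recalling $s_k\ge0$, then gives the desired uniform bound on $\{s_k\}$.

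This is an elementary discrete Gr\"onwall-type estimate, so I do not anticipate a genuinely hard step. The only point that demands care is that $\{b_k\}$ need not be of one sign: a direct induction on the raw recurrence would let the positive increments $b_k$ accumulate, and it is precisely the tail-sum substitution --- equivalently, shifting by the partial sums $\sum_{j<k}b_j$, which are bounded because the series converges --- that neutralises this accumulation. Verifying the key inequality then reduces to the identity $r_k=b_k+r_{k+1}$ and the bound $r_k\le\rho$, both routine.
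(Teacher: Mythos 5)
Your proof is correct, and your opening observation is exactly right: as literally stated the conclusion is vacuous, since $0\le a_k\le\lambda_k C\le C$ already bounds $\{a_k\}$; the intended conclusion is boundedness of $\{s_k\}$, which is precisely how the paper uses the lemma in Lemma \ref{lm3.4} (to conclude that the sequence $\{q_k\}$, playing the role of $s_k$, is bounded). Note, however, that the paper gives no proof of this statement at all --- it is quoted from the reference \cite{MAI} --- so there is no internal argument to compare against; what you have supplied is a self-contained replacement for the citation. Your argument checks out line by line: with $r_k=\sum_{j\ge k}b_j$ the identity $r_k=b_k+r_{k+1}$ does yield
\begin{equation*}
v_{k+1}=s_{k+1}+r_{k+1}\le(1-\lambda_k)v_k+\lambda_k\bigl(C+r_k\bigr)\le(1-\lambda_k)v_k+\lambda_k C',\qquad C'=C+\sup_k|r_k|,
\end{equation*}
the induction $v_k\le\max\{v_1,C'\}$ is sound because $C'$ itself does not exceed that maximum, and undoing the shift gives $0\le s_k\le\max\{v_1,C'\}+\sup_k|r_k|$. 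A point genuinely in your favour: the hypothesis is only that $\sum_k b_k$ converges, not that it converges absolutely, so the naive induction $s_{k+1}\le\max\{s_1,C\}+\sum_{j\le k}b_j^{+}$ can fail to produce a finite bound (for a conditionally convergent series $\sum_j b_j^{+}$ may diverge); your shift by the bounded tails --- equivalently by the bounded partial sums --- is exactly the device that handles signed perturbations, and many textbook versions of this lemma sidestep the issue by assuming $b_k\ge 0$ or $\sum_k|b_k|<\infty$ instead.
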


For further details, see \cite{BauschkeCombettes}.

\subsection{Monotone operators}

In this subsection, we review some notions related to operators, especially the definition of monotonicity.  Let $F: H \longrightarrow 2^{H}$ be a set-valued mapping on $H$. 	The \emph{graph} of $F$ is defined as   $\mbox{\rm gr}(F)=\{(x, u)\in H\times H: u\in F(x)\}$. The \emph{domain} and \emph{range} of $F$ are given by 
$$\mbox{\rm dom } F=\{u\in H: F(u)\neq \emptyset\}$$ and 
$$\mbox{\rm ran } F=\{y\in H: \mbox{ there exists } x\in H, y\in F(x)\}.$$
We now recall the notion of \emph{generalized monotonicity}, which extends classical monotonicity by allowing the modulus to be negative. This weaker condition enables the study of a broader class of operators. 
\begin{definition} \cite{Minh} 
	An operator   $F: H \longrightarrow 2^{H}$ is said to be  \emph{ $\mu_F$ - monotone} if there exists  $\mu_F \in \R$ such that $\langle x-y, u-v\rangle \geq \mu_F\|x-y\|^2$ for all $x, y\in H, u\in F(x), v\in F(y)$. 
\end{definition}
\begin{remark} Note that in the definition above, unlike the classical definition, here we do not require that $\mu_F\geq 0$. In fact, if $\mu_F<0$,  $F$ is said to be \emph{weakly-monotone}. When $\mu_F=0$, $\mu_F$-monotonicity reduces to the classical monotonicity. If $\mu_F>0$, an $\mu_F$-monotone operator $F$  becomes  strongly monotone.   
\end{remark}

\begin{definition}\cite{Minh} 
	A  $\mu_F$-monotone operator $F$ is said to be  \emph{maximal} if there is no $\mu_F$-monotone operator whose graph strictly contains the graph of  $F$.  
\end{definition}

Here, we recall an important notion of Lipschitz continuity, which frequently appears in the study of algorithms.

\begin{definition}
	An operator $G~: H\longrightarrow H$ is said to be  \emph{Lipschitz continuous} with constant \(L\geq 0\) if $\|G(x)-G(y)\|\leq L\|x-y\|$ for all \(x,y\in H\).
\end{definition}
The resolvent of an operator is a fundamental tool in the study of inclusion problems. We now recall its definition. The resolvent of an operator $F$ with the parameter $\gamma$  is given by  
$$J_{\lambda F} =(Id+\gamma F)^{-1},$$
where $Id$ is  the \emph{identity mapping}.

In the absence of monotonicity, the resolvent may not always return a unique value at a given point. However, the following lemma establishes that for generalized monotone operators, the resolvent remains single-valued under suitable parameter choices. Furthermore, it demonstrates that the resolvent is cocoercive, a property that will play a crucial role in the subsequent analysis.

\begin{lemma}\label{GENMON} \cite{Minh, BauschkeCombettes} Let $F : H \longrightarrow 2^{H}$ be an $\mu_F$-monotone operator and let $\gamma  > 0$ be such that $1+\gamma \mu_F>0$. Then, the following hold.
	\begin{enumerate}
		\item $J_{\gamma F}$ is a singleton;
		\item $\mbox{\rm ran } J_{\gamma F} =\mbox{\rm dom } F$; 
        \item \label{tc4}$F$ is (maximal) \(\mu_F\)-monotone if and only if $F'=F-\mu_F id$ is (maximal) monotone. 
	\end{enumerate}
\end{lemma}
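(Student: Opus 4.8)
The plan is to make the equivalence in part (3) the structural backbone and then to obtain (1) and (2) cheaply. I would first prove (3) by means of the linear shear $\Phi:H\times H\to H\times H$, $\Phi(x,u)=(x,\,u-\mu_F x)$, which is a bijection with inverse $(x,w)\mapsto(x,\,w+\mu_F x)$ and which sends $\mbox{\rm gr}(F)$ onto $\mbox{\rm gr}(F')$. The elementary identity $\langle x-y,\,(u-\mu_F x)-(v-\mu_F y)\rangle=\langle x-y,u-v\rangle-\mu_F\|x-y\|^2$ shows that a pair of points of $\mbox{\rm gr}(F)$ satisfies the $\mu_F$-monotonicity inequality if and only if the corresponding pair of points of $\mbox{\rm gr}(F')$ satisfies the ordinary monotonicity inequality; hence $F$ is $\mu_F$-monotone precisely when $F'=F-\mu_F\,id$ is monotone.

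For the maximal case, I would observe that $\Phi$ preserves graph inclusion and carries $\mu_F$-monotone graphs to monotone graphs (and, via $\Phi^{-1}$, back again). Consequently, a proper $\mu_F$-monotone extension $\widetilde F\supsetneq F$ corresponds under $\Phi$ to a proper monotone extension $\widetilde F'\supsetneq F'$, and conversely. Thus $F$ admits no proper $\mu_F$-monotone extension if and only if $F'$ admits no proper monotone extension, which is exactly the claimed equivalence of maximality.

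Parts (1) and (2) are then short. For (1), suppose $x_1,x_2\in J_{\gamma F}(z)$, so that $(z-x_i)/\gamma\in F(x_i)$ for $i=1,2$. Since $(z-x_1)/\gamma-(z-x_2)/\gamma=-(x_1-x_2)/\gamma$, the $\mu_F$-monotonicity inequality yields $-\tfrac1\gamma\|x_1-x_2\|^2\ge\mu_F\|x_1-x_2\|^2$, i.e. $\tfrac{1+\gamma\mu_F}{\gamma}\|x_1-x_2\|^2\le 0$; because $1+\gamma\mu_F>0$ and $\gamma>0$, this forces $x_1=x_2$, so $J_{\gamma F}$ is single-valued. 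For (2), a definition chase suffices: $x\in\mbox{\rm ran } J_{\gamma F}$ iff there is $z$ with $z\in x+\gamma F(x)$, which happens iff $F(x)\ne\emptyset$; hence $\mbox{\rm ran } J_{\gamma F}=\mbox{\rm dom } F$.

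I expect the maximality half of (3) to be the only delicate step: one must check that $\Phi$ respects not just single graphs but the whole ordering of $\mu_F$-monotone (resp. monotone) extensions, so that maximal elements indeed correspond. As an alternative route, single-valuedness in (1) could instead be deduced from (3) by rewriting $J_{\gamma F}$ as the resolvent $J_{\gamma'F'}$ of the monotone operator $F'$ with $\gamma'=\gamma/(1+\gamma\mu_F)>0$ and invoking the classical single-valuedness of resolvents of monotone operators; I prefer the direct estimate above, as it keeps parts (1)--(2) independent of the monotone theory.
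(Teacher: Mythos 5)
The paper itself offers no proof of this lemma: it is quoted directly from \cite{Minh, BauschkeCombettes}, so there is no internal argument to compare yours against; your proposal should be judged as a self-contained proof, and as such it is correct. The shear $\Phi(x,u)=(x,\,u-\mu_F x)$ is a bijection of $H\times H$ carrying $\mbox{\rm gr}(F)$ onto $\mbox{\rm gr}(F-\mu_F\, id)$, your inner-product identity shows that a graph satisfies the $\mu_F$-monotonicity inequality exactly when its image satisfies the ordinary monotonicity inequality, and since $\Phi$ preserves strict inclusion of graphs, proper $\mu_F$-monotone extensions of $F$ correspond bijectively to proper monotone extensions of $F'$, which settles the maximality equivalence in (3); this is the same ``tilting'' argument used in the cited reference of Dao and Phan. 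Your direct estimate for (1) is also correct: the inequality $\frac{1+\gamma\mu_F}{\gamma}\|x_1-x_2\|^2\le 0$ together with $1+\gamma\mu_F>0$ forces $x_1=x_2$ (note the lemma's ``singleton'' should indeed be read as ``at most single-valued,'' since without maximality $J_{\gamma F}(z)$ may be empty), and (2) is the definition chase you describe. One small caveat on your side remark: the alternative route to (1) requires the identity $J_{\gamma F}(z)=J_{\gamma' F'}\!\left(\frac{z}{1+\gamma\mu_F}\right)$ with $\gamma'=\gamma/(1+\gamma\mu_F)$, because $z\in x+\gamma F(x)$ rewrites as $\frac{z}{1+\gamma\mu_F}\in x+\gamma' F'(x)$; the argument of the resolvent must be rescaled along with the parameter, so ``rewriting $J_{\gamma F}$ as $J_{\gamma' F'}$'' is imprecise as stated, though this does not affect your main, direct proof.
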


It is well known that the sum of two maximal monotone operators is not always maximal. The following lemma provides a criterion for determining when the sum of two operators remains maximal. We begin by recalling a classical result and then extend it to the setting of generalized monotonicity.

\begin{lemma}\cite{LEM}\label{sum-monotone}
	Let $F: H\longrightarrow 2^H$ be maximal monotone and $G: H \longrightarrow H$ be monotone and Lipschitz continuous on $H$. Then $F+G$ is maximally monotone.  
\end{lemma}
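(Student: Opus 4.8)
The plan is to prove maximality through Minty's surjectivity criterion: a monotone operator $A$ is maximal monotone if and only if $\mathrm{ran}(Id+\gamma A)=H$ for some $\gamma>0$. I would first record that $F+G$ is monotone, which is immediate: for any $x,y\in H$, any $u\in F(x)$, $v\in F(y)$ we have $\langle x-y,u-v\rangle\ge 0$ and $\langle x-y,G(x)-G(y)\rangle\ge 0$ by monotonicity of $F$ and $G$ respectively, so $\langle x-y,(u+G(x))-(v+G(y))\rangle\ge 0$. It then remains only to verify the range condition.

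For the surjectivity, fix $\gamma\in(0,1/L)$ and an arbitrary $w\in H$; the goal is to produce $x$ with $w\in x+\gamma F(x)+\gamma G(x)$. Rearranging, this is equivalent to $w-\gamma G(x)\in(Id+\gamma F)(x)$, i.e. to the fixed-point equation $x=J_{\gamma F}(w-\gamma G(x))$, where $J_{\gamma F}=(Id+\gamma F)^{-1}$. Since $F$ is maximal monotone, $J_{\gamma F}$ is single-valued, has full domain $H$, and is nonexpansive. Defining $T_w(x):=J_{\gamma F}(w-\gamma G(x))$, I estimate, for all $x,y\in H$,
\[
\|T_w(x)-T_w(y)\|\le \|(w-\gamma G(x))-(w-\gamma G(y))\|=\gamma\|G(x)-G(y)\|\le \gamma L\,\|x-y\|,
\]
and since $\gamma L<1$, the map $T_w$ is a contraction on the complete space $H$. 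By the Banach fixed-point theorem it admits a fixed point $x^*$, which by construction satisfies $w\in(Id+\gamma(F+G))(x^*)$. As $w$ was arbitrary, $\mathrm{ran}(Id+\gamma(F+G))=H$.

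Finally I would convert surjectivity into maximality directly. Let $(x_0,y_0)$ be monotonically related to $\mathrm{gr}(F+G)$. By surjectivity there is $(x_1,z_1)\in\mathrm{gr}(F+G)$ with $x_1+\gamma z_1=x_0+\gamma y_0$, hence $x_1-x_0=-\gamma(z_1-y_0)$. Substituting this into the monotone-relation inequality $\langle x_1-x_0,\,z_1-y_0\rangle\ge 0$ yields $-\gamma\|z_1-y_0\|^2\ge 0$, which forces $z_1=y_0$ and therefore $x_1=x_0$. Thus $(x_0,y_0)=(x_1,z_1)\in\mathrm{gr}(F+G)$, and $F+G$ is maximal monotone.

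The main obstacle is guaranteeing that $T_w$ is a genuine contraction rather than merely nonexpansive: this is exactly where the Lipschitz hypothesis on $G$ enters, and it forces the restriction $\gamma<1/L$. A second point I would state explicitly is that $J_{\gamma F}$ is defined on all of $H$ and is nonexpansive; both facts are themselves consequences of the maximal monotonicity of $F$ (Minty's theorem applied to $F$), so the argument ultimately rests on the resolvent being a well-defined nonexpansive self-map with full domain. (One could alternatively invoke Rockafellar's sum theorem, since a monotone Lipschitz map with full domain is maximal monotone, but the fixed-point route is preferable here as it uses the Lipschitz constant constructively.)
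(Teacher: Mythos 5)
Your proof is correct. Note first that the paper itself offers no proof of this lemma: it is stated as a known result imported from the reference [LEM] (Lemaire) and is used only as a black box to derive the generalized-monotone version (Lemma \ref{lm2.2}) via the shift $F\mapsto F-\mu_F\,id$, $G\mapsto G-\mu_G\,id$. So there is no in-paper argument to compare against; what you have done is supply the classical self-contained proof that the paper delegates to the literature. Your argument is sound at every step: monotonicity of the sum is immediate; the range condition $\mathrm{ran}(Id+\gamma(F+G))=H$ follows from the Banach fixed-point theorem applied to $T_w(x)=J_{\gamma F}(w-\gamma G(x))$, which is a $\gamma L$-contraction precisely because the resolvent $J_{\gamma F}$ is single-valued, everywhere defined and nonexpansive (Minty's theorem for the maximal monotone $F$) and $G$ is $L$-Lipschitz with $\gamma L<1$; and your final paragraph correctly re-proves the easy direction of Minty's theorem (surjectivity of $Id+\gamma A$ for a monotone $A$ implies maximality) rather than citing it, via the identity $x_1-x_0=-\gamma(z_1-y_0)$ forcing $z_1=y_0$ and $x_1=x_0$. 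Two features of your write-up are worth keeping: the explicit acknowledgment that full domain and nonexpansiveness of $J_{\gamma F}$ are themselves consequences of maximal monotonicity of $F$, and the observation that the Lipschitz hypothesis enters only to make $T_w$ a strict contraction. The only cosmetic remark is that the restriction $\gamma\in(0,1/L)$ is harmless since Minty's criterion requires surjectivity for just one $\gamma>0$, which you state correctly.
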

\begin{lemma}\label{lm2.2} 
	Let $F: H\longrightarrow 2^H$ be maximally $\mu_F$-monotone and $G: H \longrightarrow H$ be $\mu_G$-monotone and Lipschitz continuous on $H$. Then $F+G$ is maximally $ (\mu_F+\mu_G)$-monotone.  
\end{lemma}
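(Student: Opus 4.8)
The plan is to separate the claim into two parts: first show that $F+G$ is $(\mu_F+\mu_G)$-monotone, and then show maximality by subtracting off the full modulus and invoking the classical result of Lemma~\ref{sum-monotone} through the translation equivalence in Lemma~\ref{GENMON}\,(\ref{tc4}).

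First I would verify the modulus. Fix $x,y\in H$ and take any $p\in(F+G)(x)$, $q\in(F+G)(y)$; by definition of the pointwise sum, $p=u+Gx$ and $q=v+Gy$ for some $u\in F(x)$, $v\in F(y)$. Then
\begin{equation*}
\langle x-y,\,p-q\rangle=\langle x-y,\,u-v\rangle+\langle x-y,\,Gx-Gy\rangle\geq \mu_F\|x-y\|^2+\mu_G\|x-y\|^2,
\end{equation*}
where the two terms are bounded below using the $\mu_F$-monotonicity of $F$ and the $\mu_G$-monotonicity of $G$. Hence $F+G$ is $(\mu_F+\mu_G)$-monotone.

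For maximality, the idea is to translate into the classical monotone setting. Set $F'=F-\mu_F\,id$ and $G'=G-\mu_G\,id$. By Lemma~\ref{GENMON}\,(\ref{tc4}), the maximal $\mu_F$-monotonicity of $F$ yields that $F'$ is maximal monotone, while the $\mu_G$-monotonicity of $G$ yields that $G'$ is monotone. Moreover $G'$ remains Lipschitz continuous on all of $H$ (with constant at most $L+|\mu_G|$), being the difference of the $L$-Lipschitz map $G$ and the $|\mu_G|$-Lipschitz map $\mu_G\,id$. Thus $F'$ and $G'$ meet exactly the hypotheses of Lemma~\ref{sum-monotone}, so $F'+G'$ is maximal monotone.

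Finally I would reverse the translation. Using the algebraic identity
\begin{equation*}
F'+G'=(F-\mu_F\,id)+(G-\mu_G\,id)=(F+G)-(\mu_F+\mu_G)\,id,
\end{equation*}
we see that $(F+G)-(\mu_F+\mu_G)\,id$ is maximal monotone, whence Lemma~\ref{GENMON}\,(\ref{tc4}), applied now to the operator $F+G$ with modulus $\mu_F+\mu_G$, gives that $F+G$ is maximal $(\mu_F+\mu_G)$-monotone. The argument carries essentially no hard computation; the one point requiring genuine care is the verification that the hypotheses of Lemma~\ref{sum-monotone} survive the translation—namely that $G'$ is both monotone and Lipschitz on the whole space—since that is precisely what makes the reduction to the classical case legitimate.
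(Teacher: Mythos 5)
Your proposal is correct and follows essentially the same route as the paper's own proof: translate to $F'=F-\mu_F\,id$ and $G'=G-\mu_G\,id$, apply Lemma~\ref{sum-monotone} to conclude $F'+G'=(F+G)-(\mu_F+\mu_G)\,id$ is maximal monotone, and translate back via Lemma~\ref{GENMON}\,(\ref{tc4}). Your version is in fact slightly more complete, since you explicitly verify the modulus of $F+G$ and the Lipschitz continuity of $G'$, which the paper leaves implicit.
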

\begin{proof}
	Let $\gamma>0$ such that $1+\gamma.\mu_F>0$. Because $F$ is maximally -$\mu_F$ monotone, it holds that $F':=F-\mu_F id$ is maximally monotone \cite{Minh}. Also, $G$ is $\mu_G$ monotone, Lipschitz continuous, it follows that $G'=G-\mu_G id $ is monotone, and Lipschit continuous. By Lemma \ref{sum-monotone}, $F'+G'=F+G-( \mu_F+\mu_G) id$ is maximally monotone. It follows from Part \eqref{tc4} of Lemma \ref{GENMON} that  $F+G$ is maximal $ (\mu_F+\mu_G)$-monotone. 
\end{proof}

The next lemma provides a characterization of when a generalized monotone operator is maximal. This extends the classical result. 

\begin{lemma}\label{max sum}
	Let $F: H\longrightarrow 2^H$ be a $\mu_F$-monotone operator. Then $F$ is the maximal monotone if and only if 
	\begin{equation}\label{dk max mon}
		\forall (v, y)\in \mbox{\rm gr}(F), \langle u-v, x-y\rangle \geq \mu_F\|x-y\|^2 \implies u\in F(x).
	\end{equation}  
\end{lemma}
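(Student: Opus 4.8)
The plan is to reduce the statement to the classical characterization of maximal monotonicity by means of the shift $F' := F - \mu_F\, id$, and then to translate the resulting condition back into the $\mu_F$-monotone form by a direct algebraic computation. Throughout I read ``maximal monotone'' here as ``maximally $\mu_F$-monotone'', in accordance with the standing hypothesis on $F$, and I understand the inequality in \eqref{dk max mon} as the $\mu_F$-monotonicity relation between the test pair and an arbitrary graph point.

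First I would recall the classical fact (see \cite{BauschkeCombettes}) that a monotone operator $A$ is maximal if and only if, whenever a pair $(x,u)$ satisfies $\langle x - v, u - y\rangle \geq 0$ for every $(v,y)\in\mbox{\rm gr}(A)$, one has $u\in A(x)$. By Part (3) of Lemma \ref{GENMON}, $F$ is maximally $\mu_F$-monotone if and only if $F' = F - \mu_F\, id$ is maximally monotone, so it suffices to apply this classical characterization to $F'$ and rewrite everything in terms of $F$.

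The heart of the argument is the dictionary between $\mbox{\rm gr}(F')$ and $\mbox{\rm gr}(F)$: one has $(v,y)\in\mbox{\rm gr}(F)$ if and only if $(v,\,y-\mu_F v)\in\mbox{\rm gr}(F')$, and $u\in F(x)$ if and only if $u-\mu_F x\in F'(x)$. Substituting $u' = u-\mu_F x$ and $y' = y-\mu_F v$ into the classical inequality $\langle x - v, u' - y'\rangle \geq 0$ and expanding gives
\begin{equation*}
\langle x - v, u - y\rangle - \mu_F\|x - v\|^2 \geq 0,
\end{equation*}
which is precisely the $\mu_F$-monotone pairing between $(x,u)$ and $(v,y)$; simultaneously the conclusion $u'\in F'(x)$ becomes $u\in F(x)$. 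Reading the classical characterization through this dictionary yields the equivalence asserted in \eqref{dk max mon}.

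I expect the only delicate point to be the bookkeeping in this translation and the verification that the shift by $\mu_F\, id$ interchanges the monotone and $\mu_F$-monotone relations exactly as claimed. A fully self-contained alternative avoids Lemma \ref{GENMON} altogether: for the forward direction, assuming the implication fails one adjoins the offending pair $(x,u)$ to $\mbox{\rm gr}(F)$ and checks directly that the enlarged graph is still $\mu_F$-monotone, contradicting maximality; for the converse, any proper $\mu_F$-monotone extension must contain a pair $(x,u)\notin\mbox{\rm gr}(F)$ that is $\mu_F$-monotonically related to every point of $\mbox{\rm gr}(F)$, which the implication then forces into $\mbox{\rm gr}(F)$. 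In this second route the main thing to confirm is that the augmented graph remains $\mu_F$-monotone, which follows immediately from the assumed inequality.
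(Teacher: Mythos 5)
Your proposal is correct, but your primary route is genuinely different from the paper's. The paper proves the lemma directly from the definition of maximality: for the forward direction it assumes $u_0\notin F(x_0)$, adjoins the offending pair to the graph (defining $T=F$ everywhere except $T(x_0)=F(x_0)\cup\{u_0\}$), observes that $T$ is still $\mu_F$-monotone precisely because $(x_0,u_0)$ is $\mu_F$-monotonically related to every graph point, and contradicts maximality; for the converse it takes any $\mu_F$-monotone $A$ with $\mbox{\rm gr}(F)\subseteq\mbox{\rm gr}(A)$ and uses \eqref{dk max mon} to push every element of $\mbox{\rm gr}(A)$ back into $\mbox{\rm gr}(F)$. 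This is exactly the ``self-contained alternative'' you sketch in your closing paragraph, so your fallback coincides with the paper's argument. Your main route instead reduces the statement to the classical case: by Part \eqref{tc4} of Lemma \ref{GENMON}, $F$ is maximally $\mu_F$-monotone iff $F'=F-\mu_F\,id$ is maximally monotone, and you then quote the standard characterization of maximal monotonicity for $F'$ and translate back through the dictionary $u\mapsto u-\mu_F x$, $y\mapsto y-\mu_F v$; the computation $\langle x-v,(u-\mu_F x)-(y-\mu_F v)\rangle=\langle x-v,u-y\rangle-\mu_F\|x-v\|^2$ is exactly right. The reduction buys brevity and makes transparent that the lemma is nothing but the classical ($\mu_F=0$) characterization conjugated by a shift; the paper's direct proof buys self-containedness, since it never invokes the classical characterization as a black box but in effect reproves it uniformly in $\mu_F$.

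One bookkeeping remark, which vindicates rather than undermines your opening caveat: your translation places the modulus on the difference of the two domain points, $\mu_F\|x-v\|^2$, whereas the printed statement reads $\mu_F\|x-y\|^2$. These agree only if one reads the pair $(v,y)$ in \eqref{dk max mon} as listing the image first and the domain point second, contrary to the paper's earlier convention $\mbox{\rm gr}(F)=\{(x,u):u\in F(x)\}$ (the paper's own proof silently adopts this reading, and moreover drops the modulus term, writing $\langle u_0-v,x_0-y\rangle\geq 0$ where $\geq\mu_F\|\cdot\|^2$ is meant). So interpreting the hypothesis of \eqref{dk max mon} as the $\mu_F$-monotone pairing between $(x,u)$ and an arbitrary graph point, as you do, is the correct reading; the discrepancy is a typographical defect of the statement, not a gap in your argument.
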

\begin{proof}
	Suppose that $F$ is maximal $\mu_F$-monotone and $u_0, x_0\in H$ such that for all $(v, y)\in \mbox{\rm gr} F, \langle u_0-v, x_0-y\rangle \geq0$. We now suppose contradiction that $u_0\not\in F(x_0)$. Let 
	\begin{equation*}
		T(x)=\begin{cases}
			F(x) &\mbox{ if } x\neq x_0\\
			F(x)\cup \{u_0\} & \mbox{ otherwise} 
		\end{cases}
	\end{equation*}
	Then $T$ is $\mu_F$-monotone and $\gr \ F \subset gr\ T$, a contradiction to the maximality of $F$. Hence, $u_0\in F(x_0)$.
	
	Assume now that $u, x\in H$ satisfies condition \eqref{dk max mon}. Let  $A:H \longrightarrow 2^H $ be a $\mu_F$-monotone such that $\gr F\subseteq \gr A.$ Then for all $(u,x)\in \gr A$, by the $\mu_F$-monotone of $A$ we have that $\langle u-v, x-y \rangle \geq \mu_F\|x-y\|^2$ for all $(v, y)\in \gr A$. Since $\gr \ F\subseteq \gr \ A$, this also holds for all $(v,y)\in \gr F$. By condition \eqref{dk max mon} we derive that $u\in F(x)$ or $(u,x)\in \gr F$. This implies that $F$ is maximal $\mu_F$-monotone.  
\end{proof}

For a comprehensive discussion on monotone operators, their applications in optimization problems, and the properties of their resolvent, we refer readers to \cite{AVR, BauschkeCombettes, Minh}.

\section {Algorithm and Convergence analysis}\label{sec3}
In this section we first present an algorithm proposed in \cite{CHIN}, we then analyze the property of strong convergence for the sequence generated by the algorithm. We emphasize that in assumption \ref{gt1} operators are assumed to be generalized monotone, which is weaker than classical monotonicity. 
\begin{algorithm}\label{alg1}
Let $\gamma \in \left(0, \frac{1}{2L}\right), \theta_1\in [0,1), \ \theta_2\leq 0$ and take $\{\lambda_k\}\subseteq (0, 1)$. For any $w^*, u_{-1}, u_0, u_1\in H$, and suppose that $u_k, u_{k-1}, u_{k-2}$ are given. Set 
\begin{align}\label{eqalg}
    u_{k+1}=&J_{\gamma F} (\lambda_k w^*+(1-\lambda_k)\big(u_k-\theta_1(u_k-u_{k-1})+\theta_2(u_{k-1}-u_{k-2}) \notag \\
    & -\gamma Gu_k -\gamma (1-\lambda_k)(Gu_k-Gu_{k-1})\big), \quad \forall k\geq 1. 
\end{align}
\end{algorithm} 
To achieve strong convergence in this algorithm, we impose the following assumptions on the operators and parameters:
\begin{assumption}\label{gt1}
	\begin{enumerate}
		\item \label{dk1 gt1}\(F\) is maximal $-\mu_F$ monotone;
		\item \label{dk2 gt1} $G$ is $\mu_G$-monotone and Lipschitz continuous with constant $L>0$;
		\item \label{dk5 gt1} $ \mu_F+\mu_G\geq 0$;
		\item \label{dk3 gt1} $\zer(F+G)\neq \emptyset $; 
		\item \label{dk4 gt1} $\theta_1, \theta_2$ satisfy $0\leq \theta_1 <\frac{1}{3}(1-2\gamma L),\quad  \frac{1}{3+4\theta_1} \left(3\theta_1-1+2\gamma L<\theta_2 \leq 0\right)$;
        \item $1+\gamma \mu_F>0$.
	\end{enumerate}
\end{assumption}
\begin{remark}
It is worth noting that condition \eqref{dk5 gt1} in this assumption is weaker than the monotonicity assumption. Indeed, it may happen that $F$ or $G$ is weakly monotone, while the sum of the two operators is monotone or strongly monotone. 
\end{remark}

\begin{lemma}\label{lm3.4}
	Assume that Assumption \ref{gt1} holds. Then the sequence \(\{u_k\}\) generated by the Algorithm \ref{alg1} is bounded whenever $\lim\limits_{k\to \infty} \lambda_k=0$.
\end{lemma}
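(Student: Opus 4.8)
The plan is to fix a point $u^*\in\zer(F+G)$ and to track a suitable energy built from $\|u_k-u^*\|^2$ together with the two consecutive--difference norms $\|u_k-u_{k-1}\|^2$ and $\|u_{k-1}-u_{k-2}\|^2$ that the two--step inertial extrapolation forces upon us. Since $0\in(F+G)(u^*)$ we have $-Gu^*\in Fu^*$, so $(u^*,-Gu^*)\in\gr F$ and $u^*=J_{\gamma F}(u^*-\gamma Gu^*)$, the resolvent being single--valued by Lemma \ref{GENMON} because $1+\gamma\mu_F>0$. Writing $y_k$ for the argument of the resolvent in \eqref{eqalg}, the update gives $\gamma^{-1}(y_k-u_{k+1})\in Fu_{k+1}$. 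Pairing this with $(u^*,-Gu^*)$ and invoking the $\mu_F$--monotonicity of $F$, then the $\mu_G$--monotonicity of $G$ at $(u_{k+1},u^*)$, and finally Assumption \ref{gt1}\eqref{dk5 gt1} ($\mu_F+\mu_G\ge 0$), the two modulus terms combine into $\gamma(\mu_F+\mu_G)\|u_{k+1}-u^*\|^2\ge 0$, which we discard. After applying the polarization identity Lemma \ref{simple}(b) to $\langle u_{k+1}-u^*,\,y_k-u_{k+1}\rangle$ this yields the master estimate
\[
\|u_{k+1}-u^*\|^2\le\|y_k-u^*\|^2-\|y_k-u_{k+1}\|^2+2\gamma\langle u_{k+1}-u^*,\,Gu_{k+1}\rangle .
\]
This is the one place where the generalized (possibly negative) moduli enter, and it is exactly condition \eqref{dk5 gt1} that lets me dispose of them.

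Next I would decompose the right--hand side according to the three mechanisms of the algorithm. Convexity of $\|\cdot\|^2$ (equivalently Lemma \ref{simple}(c)) applied to the anchoring step gives $\|y_k-u^*\|^2\le\lambda_k\|w^*-u^*\|^2+(1-\lambda_k)\|t_k-u^*\|^2$, where $t_k$ is the inertial--plus--forward point. Writing $t_k-u^*=(p_k-u^*)-\gamma d_k$ with $p_k$ the inertial extrapolation and $d_k=Gu_k+(1-\lambda_k)(Gu_k-Gu_{k-1})$, I expand $\|p_k-u^*\|^2$ via Lemma \ref{simple}(a) with the choice $a=-\theta_1$, $b=\theta_2$, so that $(1+a)x-(a-b)y-bz$ reproduces exactly $(1-\theta_1)(u_k-u^*)+(\theta_1+\theta_2)(u_{k-1}-u^*)-\theta_2(u_{k-2}-u^*)$. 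This converts the inertial part into $\|u_k-u^*\|^2$ together with explicit multiples of $\|u_k-u_{k-1}\|^2$, $\|u_k-u_{k-2}\|^2$ and $\|u_{k-1}-u_{k-2}\|^2$.

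The forward evaluations are then handled by the reflection identity: the term $2\gamma\langle u_{k+1}-u^*,Gu_{k+1}\rangle$ is matched against the $-\gamma d_k$ contributions so that it splits into a telescoping quantity of the form $\gamma\langle Gu_{k+1}-Gu_k,\,u_{k+1}-u^*\rangle$ plus residual inner products, the latter bounded by Lipschitz continuity of $G$ and Young's inequality, producing only $\gamma L$--weighted difference norms $\|u_{k+1}-u_k\|^2$ and $\|u_k-u_{k-1}\|^2$. Collecting everything yields a recursion $\Gamma_{k+1}\le(1-\lambda_k)\Gamma_k+\lambda_kC+b_k$, where $\Gamma_k$ equals $\|u_k-u^*\|^2$ plus nonnegative multiples of the difference norms plus the telescoping forward term, the anchor supplies the $\lambda_kC$ part, and the $O(\lambda_k)$ perturbations arising from the factors $(1-\lambda_k)$ are absorbed either into $(1-\lambda_k)\Gamma_k$ or into a summable remainder $b_k$. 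A direct application of Lemma \ref{lm2.3} with $s_k=\Gamma_k$, $a_k=\lambda_kC$ and $\sum b_k<\infty$ then shows that $\{\Gamma_k\}$ is bounded, whence $\{u_k\}$ is bounded.

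The main obstacle is the bookkeeping in these last two steps: after substituting Lemma \ref{simple}(a) and the Lipschitz bounds, one is left with a quadratic form in $\|u_{k+1}-u_k\|$, $\|u_k-u_{k-1}\|$ and $\|u_{k-1}-u_{k-2}\|$, and the whole argument hinges on the coefficients of these difference norms carrying the correct signs, so that $\Gamma_k$ is genuinely nonnegative and the recursion closes. This is precisely why the parameter window in Assumption \ref{gt1}\eqref{dk4 gt1}---the upper bound $\theta_1<\tfrac13(1-2\gamma L)$ together with the matching lower bound on $\theta_2$ and $\gamma L<\tfrac12$ from $\gamma\in(0,\tfrac{1}{2L})$---is imposed: these inequalities are exactly the conditions that render the residual quadratic form absorbable. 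The single hypothesis $\lambda_k\to 0$ enters only to guarantee that the $\lambda_k$--perturbations are of lower order and hence do not disturb the boundedness conclusion.
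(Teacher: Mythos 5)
Your strategy is the paper's own: fix $u^*\in\zer(F+G)$, pair the resolvent inclusion with $-\gamma Gu^*\in\gamma Fu^*$, use the $\mu_F$- and $\mu_G$-monotonicity together with $\mu_F+\mu_G\ge 0$ to discard the moduli, expand the inertial point via Lemma \ref{simple}(a) and the anchor via Lemma \ref{simple}(b)--(c), bound the reflected terms by Lipschitz continuity and Young's inequality, and close a recursion $\Gamma_{k+1}\le(1-\lambda_k)\Gamma_k+\lambda_k C$ by Lemma \ref{lm2.3}. Up to the order in which polarization and the splitting of the forward terms are applied, this is the same proof skeleton.

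However, the way you propose to close the recursion has a genuine gap. First, the fallback of pushing the $O(\lambda_k)$ leftovers into ``a summable remainder $b_k$'' is not available: the only standing hypothesis is $\lambda_k\to 0$, and in the intended application (the strong convergence theorem) one assumes $\sum_k\lambda_k=\infty$, so $O(\lambda_k)$ terms need not be summable. Nor can such leftovers be placed in the slot $a_k\le\lambda_k C$ of Lemma \ref{lm2.3} when the implicit ``constant'' involves $\|u_k-u_{k-1}\|^2$ or $\|u_k-u^*\|^2$, since that presupposes the very boundedness being proved. The paper's proof needs no $b_k$ at all: after the full expansion, every $\lambda_k$-weighted term is either the constant $\lambda_k\|w^*-u^*\|^2$ or carries a favorable sign and is simply dropped; see \eqref{eq3.19}. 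Second, the energy you describe ($\|u_k-u^*\|^2$ plus nonnegative multiples of difference norms plus the telescoping forward term) cannot satisfy $\Gamma_{k+1}\le(1-\lambda_k)\Gamma_k+\lambda_k C$: expanding $\|w_k-u^*\|^2$ by Lemma \ref{simple}(a) inevitably produces the lagged distances $\|u_{k-1}-u^*\|^2$ and $\|u_{k-2}-u^*\|^2$, so the energy must carry them as well --- the paper's $q_k$ contains $-\theta_1\|u_{k-1}-u^*\|^2-\theta_2\|u_{k-2}-u^*\|^2$ --- and because the $\theta_1$-term and the cross term $2\gamma\langle Gu_k-Gu_{k-1},u^*-u_k\rangle$ enter with unfavorable or indefinite signs, the nonnegativity of the energy (required both by Lemma \ref{lm2.3} and to convert boundedness of $\Gamma_k$ into boundedness of $\|u_k-u^*\|$) is a real verification, carried out in \eqref{eq3.22} using precisely the window \eqref{dk4 gt1}. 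In short, what you set aside as ``bookkeeping'' --- the positivity of the coefficients $a_1,a_2$ in \eqref{eq3.18}--\eqref{eq3.19} and the bound $q_k\ge 0$ --- is the substantive content of the paper's proof, and the shortcut offered in its place would fail.
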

\begin{proof}
	Let \(u^*\in zer(F+G)\)  and set $z_k=\lambda w^*+(1-\lambda_k)w_k$ with $w_k=u_k+\theta_1(u_k-u_{k-1})+\theta_2(u_{k-1}-u_{k-2})$. Then 
	\begin{equation}\label{eq3.2}
		-\gamma Gu^*\in \gamma F u^*
	\end{equation}
	and
	\begin{equation}\label{eq3.3}
		z_k-\gamma G u_k-\gamma(1-\lambda_k)(G u_k-Gu_{k-1})-u_{k+1}\in \gamma  F u_{k+1}.        
	\end{equation}
	By the $\mu_F$-monotonicity of $F$ it follows from \eqref{eq3.2} and \eqref{eq3.3} that
	\begin{equation*}
		\langle z_k -\gamma G u_k -\gamma (1-\lambda_k)(G u_k-Gu_{k-1})-u_{k+1}+\gamma G u^*, u_{k+1}-u^*\geq 2\gamma \mu_F \|u_{k+1}-u^*\|^2.
	\end{equation*}  
	Consequently, 
	\begin{align}
		2\gamma \mu_F \|u_{k+1}-u^*\|^2 & \leq 2\langle u_{k+1}-z_k+\gamma G u_k +\gamma (1-\lambda_k)(G u_k-Gu_{k-1})-\gamma G u^*, u^*-u_{k+1}\rangle \notag\\
		& = 2 \langle u_{k+1}-z_k, u^*-u_{k+1}\rangle +2\gamma \langle Gu_k-Gu^*, u^*-u_{k+1}\rangle +2\gamma (1-\lambda_k) \notag\\
		&\langle G u_k-Gu_{k-1}, u^*-u_k\rangle +2\gamma (1-\lambda_k)\langle Gu_k-G_{k-1}, u_k-u_{k+1}\rangle \notag      
	\end{align}
	By equation \eqref{eq2.1} it follows that 
	\begin{align}\label{eq3.4}
		2\gamma \mu_F \|u_{k+1}-u^*\|^2 & \leq  \|z_k-u^*\|^2-\|u_{k+1}-u^*\|-\|u_{k+1}-z_k\|^2+2\gamma \langle Gu_k-Gu^*, u^*-u_{k+1}\rangle \notag \\
		&+2\gamma (1-\lambda_k)\langle Gu_k-Gu_{k-1}, u^*-u_k\rangle +2\gamma (1-\lambda_k)\langle G u_k-Gu_{k-1}, u_k-u_{k+1}\rangle 
	\end{align}
	Because of $\mu_G$-monotonicity of $G$ one has 
	\begin{align}\label{eq3.5}
		& \langle G u_k-G u^*, u^*-u_{k+1} \rangle = \langle G u_k-Gu_{k+1}+Gu_{k+1}-G u^*, u^*-u_{k+1} \rangle \notag \\
		=&   \langle G u_k-Gu_{k+1},  u^*-u_{k+1} \rangle + \langle Gu_{k+1}-G u^*, u^*-u_{k+1} \rangle \notag \\
		\leq & \langle G u_k-Gu_{k+1},  u^*-u_{k+1} \rangle  -\mu_G\|u_{k+1}-u^*\|^2.
	\end{align}
	In addition, $G$ is Lipschit continuous with modulus $L$, we have
	\begin{align}\label{eq3.6}
		2\gamma \langle G u_k-Gu_{k-1}, u_k-u_{k+1}\rangle \leq & 2 \gamma \|Gu_k-Gu_{k-1}\|u_k-u_{k+1}\| \notag \\
		\leq & 2\gamma L \|u_k-u_{k-1}\| \|u_k-u_{k+1}\| \notag \\
		\leq & \gamma L \left(\|u_k-u_{k-1}\|^2+\|u_k-u_{k+1}\|^2\right).
	\end{align}
	Substituting \eqref{eq3.5} and \eqref{eq3.6} into \eqref{eq3.4} we get 
	\begin{align} \label{eq3.7} 
		& (2\gamma \mu_F+2\gamma \mu_G)\|u_{k+1}-u^*\|^2 +\|u_{k+1}-u^*\|^2+2\gamma \langle Gu_{k+1}-Gu_k, u^*-u_{k+1}\rangle \notag \\
		\leq & \|z_k-u^*\|^2-\|u_{k+1}-z_k\|^2+2\gamma (1-\lambda_k)\langle Gu_k-Gu_{k-1}, u^*-u_k\rangle  \notag \\
		&+(1-\lambda_k) \gamma L \left(\|u_k-u_{k-1}\|^2+\|u_k-u_{k+1}\|^2\right), \forall k\geq k_0.
	\end{align}
	By Lemma \ref{lm2.1} one has 
	\begin{align}\label{eq3.8}
		\|z_k-u^*\|^2=& \|w_k-u^*-\lambda_k(w_k-w^*)\|^2=\|w_k-u^*\|^2+\lambda_k^2\|w_k-w^*\|^2-2\lambda_k\langle w_k-u^*, w_k-w^*\rangle \notag \\
		= & \|w_k-u^*\|^2+\lambda_k^2\|w_k-w^*\|^2-\lambda_k\|w_k-w^*\|^2-\lambda_k\|w_k-u^*\|^2+\lambda_k\|w^*-u^*\|^2.
	\end{align}
	Substituting $u^*$ by $u_{k+1}$ in \eqref{eq3.8} one obtains
	\begin{align}\label{eq3.9} 
		\|z_k-u_{k+1}\|^2=& \|w_k-u_{k+1}\|^2+\lambda_k^2\|w_k-w^*\|^2-\lambda_k\|w_k-w^*\|^2-\lambda_k\|w_k-u_{k+1}\|^2\notag \\
		& +\lambda_k\|w^*-u_{k+1}\|^2.
	\end{align}
	Difference of \eqref{eq3.8} and \eqref{eq3.9} yields 
	\begin{align}\label{eq3.10}
		&\|z_k-u^*\|^2-\|w_k-u_{k+1}\|^2 \notag \\
		=& (1-\lambda_k)\|w_k-u^*\|^2+\lambda_k\|w^*-u^*\|^2-(1-\lambda_k)\|u_{k+1}-w_k\|^2-\lambda_k\|u_{k+1}-w^*\|.
	\end{align}
	Combining \eqref{eq3.10} and \eqref{eq3.7} we get 
	\begin{align}\label{eq3.11}
		& (2\gamma \mu_F+2\gamma\mu_G)\|u_{k+1}-u^*\|^2+\|u_{k+1}-u^*\|^2+2\gamma \langle Gu_{k+1}-Gu_k, u^*-u_{k+1}\rangle \notag \\
		\leq & (1-\lambda_k)\|w_k-u^*\|^2+\lambda_k\|w^*-u^*\|^2-(1-\lambda_k)\|u_{k+1}-w_k\|^2-\lambda_k\|u_{k+1}-w^*\|^2 \notag \\
		& +2\gamma (1-\lambda)\langle Gu_k-Gu_{k-1}, u^*-u_k\rangle + (1-\lambda_k)\gamma L   \left(\|u_k-u_{k-1}\|^2+\|u_k-u_{k+1}\|^2\right)
	\end{align}
    for all \( k\geq k_0.\)
	By \eqref{eq2.2} we derive that
	\begin{align}\label{eq3.12} 
		\|w_k-u^*\|^2=& \|u_k+\theta_1(u_k-u_{k-1})+\theta_2(u_k-1-u_{k-1})-u^*\|^2 \notag \\
		=& \|(1+\theta_1)(u_k-u^*)-(\theta_1-\theta_2)(u_{k-1}-u^*)-\theta_2(u_{k-2}-u^*\|^2\notag \\
		&+(1+\theta_1)(\theta_1-\theta_2)\|u_k-u_{k-1}\|^2+\theta_2(1+\theta_1)\|u_k-u_{k-2}\|^2.
	\end{align}
	In addition, by \eqref{eq2.1} it holds that
	\begin{align}\label{eq3.13} 
		\|u_{k+1}-w_k\|^2=& \|u_{k+1}-u_k\|^2-2\langle u_{k+1}-u_k, \theta_1(u_k-u_{k-1})+\theta_2(u_{k-1}-u_{k-2})\notag \\
		&+ \|\theta_1(u_k-u_{k-1})+\theta_2(u_{k-1}-u_{k-2}\|^2\notag \\
		=& \|u_{k+1}-u_k\|-2\theta_1\langle u_{k+1}-u_k, u_k-u_{k-1}\rangle \notag\\
		&-2\theta_2 \langle u_{k+1}-u_k, u_{k-1}-u_{k-2} \rangle +\theta_1^2\|u_k-u_{k-1}\|^2\notag\\
		&+2\theta_2\theta_1 \langle u_k-u_{k-1}, u_{k-1}-u_{k-2}\rangle +\theta_2^2\|u_{k-1}-u_{k-2}\|^2.
	\end{align}
	Moreover, note that 
	\begin{align}\label{eq3.14}
		-2\theta_1\langle u_{k+1}-u_k, u_k-u_{k-1}\rangle \geq & -2\theta_1 \|u_{k+1}-u_k\|\|u_k-u_{k-1}\|\notag\\
		\geq & -\theta_1 \|u_{k+1}-u_k\|^2-\theta_1\|u_k-u_{k-1}\|^2,
	\end{align}
	\begin{align}\label{eq3.15}
		-2\theta_2 \langle u_{k+1}-u_k, u_{k-1}-u_{k-2}\rangle \geq & -2|\theta_2| \|u_{k+1}-u_k\|\|u_{k-1}-u_{k-2}\|\notag\\
		\geq & -|\theta_2| \|u_{k+1}-u_k\|^2-|\theta_2|\|u_{k-1}-u_{k-2}\|^2,
	\end{align}
	\begin{align}\label{eq3.16}
		2\theta_2\theta_1 \langle u_k-u_{k-1},u_{k-1}-u_{k-2}\rangle \geq & -2|\theta_2| |\theta_1| \|u_k-u_{k-1}\|u_{k-1}-u_{k-2}\|\notag\\
		\geq &-|\theta_2||\theta_1| \|u_k-u_{k-1}\|^2-|\theta_2||\theta_1| \|u_{k-1}-u_{k-2}\|^2.
	\end{align}
	Plugging \eqref{eq3.14}, \eqref{eq3.15}, and \eqref{eq3.16} into \eqref{eq3.13} yields
	\begin{align}\label{eq3.17}
		\|u_{k+1}-w_k\|^2\geq &\|u_{k+1}-u_k\|^2-\theta_1\|u_{k+1}-u_k\|^2 -\theta_1\|u_{k}-u_{k-1}\|^2\notag \\
		& -|\theta_2|\|u_{k+1}-u_k\|^2 -|\theta_2|\|u_{k_1}-u_{k-2}\|^2 +\theta_1^2 \|u_k-u_{k-1}\|^2\notag\\
		=& (1-\theta_1-|\theta_2|)\|u_{k+1}-u_k\|^2+(\theta_1^2-\theta_1-\theta_1|\theta_2|)\|u_{k}-u_{k-1}\|^2\notag\\
		& + (\theta_2^2-|\theta_2|-\theta_1|\theta_2|)\|u_{k-1}-u_{k-2}\|^2.
	\end{align}
	Again, plugging \eqref{eq3.12} and \eqref{eq3.17} into \eqref{eq3.11} we get 
	\begin{align*}
		& (2\gamma \mu_F+2\gamma\mu_G)\|u_{k+1}-u^*\|^2+\|u_{k+1}-u^*\|^2+2\gamma \langle Gu_{k+1}-Gu_k, u^*-u_{k+1}\rangle \notag\\
		\leq & (1-\lambda_k)\left[ (1+\theta_1 )\|u_k-u^*\|^2-(\theta_1-\theta_2)\|u_{k-1}-u^*\|^2-\theta_2\|u_{k-2}-u^*\|^2\right. \notag \\
		& \left. (1+\theta_1  )(\theta_1-\theta_2)\|u_k-u_{k-1}\|^2+\theta_2(1+\theta_1)\|u_k-u_{k-2}\|^2-\theta_2(\theta_1-\theta_2)\|u_{k-1}-u_{k-2}\|^2\right] \notag\\
		&+\lambda_k\|w^*-u^*\|^2-(1-\lambda_k)\left[ (1-\theta_1-|\theta_2|\|u_{k-1}-u_k\|^2+(\theta_1^2-\theta_1-\theta_1|\theta_2|\|u_k-u_{k-1}\|^2\right.\notag\\
		&\left. + (\theta_2^2-|\theta_2|-\theta_1|\theta_2|)\|u_{k-1}-u_{k-2}\|^2\right]-\lambda_k\|u_{k+1}-w^*\|^2\notag \\
		& +2\gamma (1-\lambda_k)\langle G u_k-Gu_{k-1}, u^*-u_k \rangle +(1-\lambda_k)\gamma L \left(\|u_k-u_{k-1}\|^2+\|u_{k+1}-u_k\|^2\right)\notag\\
		\leq & (1-\lambda_k)\left[ (1+\theta_1 )\|u_k-u^*\|^2-(\theta_1-\theta_2)\|u_{k-1}-u^*\|^2-\theta_2\|u_{k-2}-u^*\|^2\right.\notag \\
		&+(2\theta_1-\theta_2-\theta_1\theta_2 +\theta_1|\theta_2|)\|u_k-u_{k-1}\|^2+(|\theta_2|+|\theta_2|\theta_1-\theta_2\theta_1)\|u_{k-1}-u_{k-2}\|^2\notag\\
		& \left. -(1-\theta_1-|\theta_2|)\|u_{k+1}-u_k\|^2+2\gamma \langle G u_k-Gu_{k-1}, u^*-u_k\rangle \right]\notag\\
		&+\lambda_k\|w^*-u^*\|^2+(1-\lambda_k)\gamma L \left(\|u_k-u_{k-1}\|^2+\|u_{k+1}-u_{k}\|^2\right), \forall k\geq k_0.
	\end{align*}
	It follows that
	\begin{align}\label{eq3.18}
		&(1+2\gamma \mu_F+2\gamma\mu_G)\|u_{k+1}-u^*\|^2-\theta_1\|u_k-u^*\|^2-\theta_2\|u_{k-1}-u^*\|^2+2\gamma \langle G u_{k+1} -Gu_k, u^* -u_{k+1} \rangle \notag  \\
		&+(1-|\theta_2| -\theta_1 -\gamma L )\|u_{k+1}-u_{k}\|^2 \leq (1-\lambda_k) \left[\|u_k-u^*\|^2-\theta_1\|u_{k-1}-u^*\|^2\right.\notag \\
		& -\theta_2\|u_{k-2}-u^*\|^2+2\gamma \langle G u_k-G u_{k-1}, u^*-u_k\rangle \notag \\
		&\left. +(1-|\theta_2|-\theta_1 -\gamma L )\|u_k-u_{k-1}\|^2\right] +\lambda_k\|w^*-u^*\|^2 \notag \\
		&+(1-\lambda_k)\left[ 2\gamma L+3\theta_1-1+(1+\theta_1)(|\theta_2|-\theta_2)\right] \|u_k-u_{k-1}\|^2\notag \\
		&+(1-\lambda_k)(|\theta_2|+|\theta_2|\theta_1-\theta_2\theta_1)\|u_{k-1}-x_{k-2}\|^2\notag\\
		= & (1-\lambda_k)\left[ \|u_k-u^*\|^2-\theta_1\|u_{k-1}-u^*\|^2-\theta_2 \|u_{k-2}-u^*\|^2 +2\gamma \langle Gu_k -Gu_{k-1}, u^*-u_k\rangle \right. \notag\\
		&+(1-|\theta_2|-\theta_1 -\gamma L)\|u_k-u_{k-1}\|^2 \notag \\
		& \left. -(2\gamma L +3\theta_1 -1 +(1+\theta_1)(|\theta_2|-\theta_2)\left(\|u_{k-1}-u_{k-2}\|^2 -\|u_k-u_{k-1}\|^2\right)\right] +\lambda_k \|w^*-u^*\|^2\notag\\
		&-(1-\lambda_k)\left[ -(2\gamma L +3\theta_1 -1+(1+\theta_1)(|\theta_2|-\theta_2)-(|\theta_2|+|\theta_2|\theta_1 -\theta_2\theta_1)\right]\|u_{k-1}-u_{k-2}\|^2.
	\end{align}
	Set $a_1=-(2\gamma L+3\theta_1-1+(1+\theta_1)(|\theta_2|-\theta_2)),\ a_2=1-3\theta_1 -2\gamma L-2|\theta_2|-2\theta_1 |\theta_2| +\theta_2 +2\theta_1\theta_2$ and 
	\begin{align*}
		q_k= & (1+2\gamma \mu_F+2\gamma\mu_G)\|u_k-u^*\|^2-\theta_1 \|u_{k-1}-u^*\|^2 -\theta_2 \|u_{k-2}-u^*\|+2\gamma \langle Gu_k-Gu_{k-1}, u^*-u_k\rangle \notag \\
		&+(1-|\theta_2|-\theta_1-\gamma L)\|u_k-u_{k-1}\|^2+a_1\|u_{k-1}-u_{k-2}\|^2.
	\end{align*}
	Then \eqref{eq3.18} reads as follows.
	\begin{align}\label{eq3.19}
		q_{k+1}\leq & (1-\lambda_k)q_k+\lambda_k\|u^*-w^*\|^2-(1-\lambda_k)a_2\|u_{k-1}-u_{k-2}\|^2 -(1-\lambda_k)(2\gamma \mu_F+2\gamma\mu_G)\|u_k-u^*\|^2 \notag \\
		\leq & (1-\lambda_k)q_k+\lambda_k\|u^*-w^*\|^2-(1-\lambda_k)a_2\|u_{k-1}-u_{k-2}\|^2
	\end{align} 
	for all $ k\geq k_0.$ The last inequality holds since $\lambda_k\in (0,1)$ and condition \eqref{dk5 gt1} in Assumption \ref{gt1}. 
	
	We now prove that $a_1>0, a_2>0$ and $q_k\geq 0$ for all $k=1,2,\dots $. By condition \eqref{dk4 gt1} in Assumption \ref{gt1} we have $3\theta_1-1+2\gamma L<0$. Therefore, $\dfrac{1}{2+2\theta_1}(3\theta_1-1+2\gamma L <\dfrac{1}{3+4\theta_1} (3\theta_1-1+2\gamma L<\theta_2.$ This implies that $$3\theta_1-1+2\gamma L -2\theta_2 -2\theta_1\theta_2<0.$$
Because $|\theta_2|=-\theta_2$, it holds that 
	\begin{equation}\label{eq3.20} 
		3\theta_1 -1+2\gamma L +|\theta_2|-\theta_2 +\theta_1|\theta_2|-\theta_1\theta_2<0.
	\end{equation}
	As a result, $a_1>0$.
	
	Next, by the assumption $\dfrac{1}{3+4\theta_1}(3\theta_1-1+2\gamma L)<\theta_2$ we get 
    $$1-3\theta_1 -2\gamma L +3\theta_2 +4\theta_2\theta_1>0.$$	
	Again, because $|\theta_2|=-\theta_2$ it holds that 
	\begin{equation}\label{eq3.21}
		1-3\theta_1 -2\gamma L -2|\theta_2| +\theta_2 -2|\theta_2|\theta_1 +2\theta_2\theta_1>0
	\end{equation}
	which implies that $a_2>0$. 
	
	To show that $q_k\geq 0$ for all $k$, note that $\theta_2\leq 0,$ and $a_1>0$. Hence for all $k\geq k_0$ we have 
	\begin{align}\label{eq3.22} 
		q_k\geq & \|u_k-u^*\|^2-\theta_1 \|u_{k-1}-u^*\|^2+2\gamma \langle Gu_k-Gu_{k-1}, u^*-u_k\rangle +(1-|\theta_2|-\theta_1-\gamma L)\|u_k-u_{k-1}\|^2\notag \\
		\geq & \|u_k-u^*\|^2-\theta_1\|u_{k-1}-u^*\|^2-\gamma L (\|u_k-u_{k-1}\|^2+\|u_k-u^*\|^2)\notag \\
		& +(1-|\theta_2|-\theta_1 -\gamma L)\|u_k-u_{k-1}\|^2 \notag  \\     
		\geq & (1-\gamma L) \|u_k-u^*\|^2-\theta_1 (2\|u_k-u_{k-1}\|^2+2\|u_k-u^*\|^2) +(1-|\theta_2|-\theta_1 -2\gamma L)\|u_k-u_{k-1}\|^2\notag \\
		=& (1-2\theta_1 -\gamma L)\|u_k-u^*\|^2+(1-|\theta_2| -3\theta_1 -2\gamma L)\|u_k-u_{k-1}\|^2\notag \\
		\geq&  (1-3\theta_1 -\gamma L)\|u_k-u^*\|+(1-|\theta_2|-\theta_1 -2\gamma L)\|u_k-u_{k-1}\|^2
	\end{align}
	Because $\theta_1 <\dfrac{1}{3}(1-2\gamma L)$ one has $3\theta_1 -1+2\gamma L<0$. It follows that $1-3\theta_1 -\gamma L>0$ and $3\theta_1 -1+2\gamma <\dfrac{1}{3+4\theta_1}(3\theta_1-1+2\gamma L)<\theta_2$. Thus, $-|\theta_2|-3\theta_1+1-2\gamma L>0$. From \eqref{eq3.22} we derive $q_k\geq 0$ for all $k\geq k_0$. 
	
	In addition, because $a_2>0$ and $\lambda_k\in (0,1)$, from \eqref{eq3.19} we have 
	$$q_{k+1}  \leq (1-\lambda_k) q_k +\lambda_k\|u^*-w^*\|^2.$$ So Lemma \ref{lm2.3} can be invoked with $b_k=0$ for all $k$ and $a_k=\lambda\|u^*-w^*\|^2$. Hence $\{q_k\}$ is bounded. Therefore, from \eqref{eq3.22} we derive that the sequence $\{u_k\}$ is also bounded as asserted. 
\end{proof}
\begin{theorem}
	Suppose that Assumption \ref{gt1} holds. Suppose further that $\lim\limits_{k\to \infty} \lambda_k=0$ and $\sum\limits_{k=1}^\infty \lambda_k=\infty$. Then the sequence $\{u_k\}$ generated by Algorithm \ref{alg1} converges strongly to $P_{\mbox{\rm zer}{(F+G)}} w^*.$
\end{theorem}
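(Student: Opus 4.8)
The plan is to read Algorithm \ref{alg1} as a Halpern-type (anchored) scheme and to force strong convergence through the Saejung--Yotkaew lemma (Lemma \ref{lm2.1}) applied to the Lyapunov functional $q_k$ already built in the proof of Lemma \ref{lm3.4}. First I would verify that the target $u^*:=P_{\mbox{\rm zer}(F+G)}w^*$ is well defined: $\mbox{\rm zer}(F+G)$ is nonempty by Assumption \ref{gt1}, while Lemma \ref{lm2.2} shows $F+G$ is maximally $(\mu_F+\mu_G)$-monotone, and since $\mu_F+\mu_G\ge 0$ this operator is maximally monotone, so its zero set is closed and convex and the projection is well defined. I will use the variational characterization $\langle w^*-u^*,\,p-u^*\rangle\le 0$ for every $p\in\mbox{\rm zer}(F+G)$, and I will take for granted the boundedness of $\{u_k\}$ and $\{q_k\}$ from Lemma \ref{lm3.4}.

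The crucial preparatory step is to sharpen the basic estimate. In the boundedness proof the term $\|z_k-u^*\|^2$ was expanded by Lemma \ref{simple}(c) and the anchoring contribution was absorbed into the \emph{constant} $\lambda_k\|w^*-u^*\|^2$ (see \eqref{eq3.8}); this suffices for boundedness but erases the sign information needed for convergence. Instead I would retain the cross term: writing $z_k-u^*=(1-\lambda_k)(w_k-u^*)+\lambda_k(w^*-u^*)$ and using $\|a+b\|^2\le\|a\|^2+2\langle b,a+b\rangle$, the anchoring enters through $2\langle w^*-u^*,\,\cdot\rangle$ rather than through a positive constant. Propagating this refinement through the computation leading to \eqref{eq3.19} produces a genuine Halpern recursion
\[
q_{k+1}\le (1-\lambda_k)q_k+\lambda_k a_k-(1-\lambda_k)a_2\|u_{k-1}-u_{k-2}\|^2,
\]
with $a_2>0$ as in Lemma \ref{lm3.4} and $a_k=2\langle w^*-u^*,\,w_k-u^*\rangle+o(1)$, the $o(1)$ collecting $\lambda_k\|w^*-u^*\|^2$ and the Lipschitz/increment remainders, all bounded.

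Then I would invoke Lemma \ref{lm2.1} with $s_k=q_k$. Fix any subsequence with $\liminf_i(q_{k_i+1}-q_{k_i})\ge 0$. Discarding the nonpositive terms, the coercive term forces $(1-\lambda_{k_i})a_2\|u_{k_i-1}-u_{k_i-2}\|^2\to 0$, and hence the consecutive increments relevant below vanish along the subsequence; combined with the Lipschitz continuity of $G$ and $\lambda_{k_i}\to 0$, this makes the residual $r_{k_i}:=\gamma^{-1}(z_{k_i-1}-u_{k_i})+(Gu_{k_i}-Gu_{k_i-1})-(1-\lambda_{k_i-1})(Gu_{k_i-1}-Gu_{k_i-2})\in (F+G)u_{k_i}$ tend to $0$. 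Passing to a weakly convergent sub-subsequence $u_{k_i}\rightharpoonup\bar u$ (and noting $w_{k_i}\rightharpoonup\bar u$), I would use the maximality characterization of Lemma \ref{max sum} for $F+G$: for arbitrary $(y,v)\in\mbox{\rm gr}(F+G)$ one has $\langle r_{k_i}-v,\,u_{k_i}-y\rangle\ge(\mu_F+\mu_G)\|u_{k_i}-y\|^2$; letting $i\to\infty$, the left side converges to $\langle -v,\bar u-y\rangle$ since $r_{k_i}\to 0$ strongly and $u_{k_i}\rightharpoonup\bar u$, while the right side is controlled by weak lower semicontinuity of $\|\cdot\|^2$, available precisely because $\mu_F+\mu_G\ge 0$. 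This yields $\langle 0-v,\bar u-y\rangle\ge(\mu_F+\mu_G)\|\bar u-y\|^2$ for all $(y,v)$, so $0\in(F+G)\bar u$. The projection inequality then gives $\limsup_i a_{k_i}=2\langle w^*-u^*,\bar u-u^*\rangle\le 0$.

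With the hypothesis of Lemma \ref{lm2.1} verified, I conclude $q_k\to 0$. Finally, the lower bound \eqref{eq3.22}, namely $q_k\ge(1-3\theta_1-\gamma L)\|u_k-u^*\|^2+(1-|\theta_2|-\theta_1-2\gamma L)\|u_k-u_{k-1}\|^2$ with both coefficients positive under Assumption \ref{gt1}, forces $\|u_k-u^*\|\to 0$, which is the asserted strong convergence to $P_{\mbox{\rm zer}(F+G)}w^*$. I expect the main obstacle to be the limit identification $\bar u\in\mbox{\rm zer}(F+G)$ in the merely generalized-monotone regime: classical demiclosedness is unavailable, so the argument must route through weak lower semicontinuity of the norm together with $\mu_F+\mu_G\ge 0$ and Lemma \ref{max sum}. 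A secondary technical point is the bookkeeping needed to guarantee that \emph{all} increments entering $r_{k_i}$ and $w_{k_i}$ (not merely $\|u_{k_i-1}-u_{k_i-2}\|$, which the recursion supplies directly) vanish along the chosen subsequence.
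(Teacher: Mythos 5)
Your proposal is correct and follows essentially the same route as the paper's own proof: the refined anchor expansion retaining the cross term $2\langle w^*-u^*,\,w_k-u^*\rangle$ (the paper's \eqref{eq3.23}--\eqref{eq3.24}), the Halpern-type recursion \eqref{eq3.26}--\eqref{eq3.27} fed into Lemma \ref{lm2.1}, identification of weak cluster points as zeros of $F+G$ via Lemmas \ref{lm2.2} and \ref{max sum}, the projection inequality \eqref{eq3.36}, and the lower bound \eqref{eq3.22} converting the Lyapunov decay into strong convergence. The two points where you deviate are in fact improvements in rigor: retaining the $(\mu_F+\mu_G)\|\cdot\|^2$ term and invoking weak lower semicontinuity is precisely what the application of Lemma \ref{max sum} demands when $\mu_F+\mu_G>0$ (the paper discards that term in \eqref{eq3.35} and then applies the lemma with only $\langle y, x-\bar{u}\rangle\ge 0$), and the increment bookkeeping you flag is a genuine loose end in the paper as well, since its recursion directly yields only $\|u_{k_i-1}-u_{k_i-2}\|\to 0$ while \eqref{eq3.28} asserts, without derivation, that $\|u_{k_i+1}-u_{k_i}\|\to 0$ too.
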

\begin{proof}
	Let $u^*=_{{\mbox{\rm} zer}{(F+G)}} w^*$. Then using \eqref{eq2.1} we have 
	\begin{align}\label{eq3.23}
		\|z_k-u^*\|=& \|\lambda_k(w^*-u^*)+(1-\lambda_k)(w_k-u^*)\|^2\notag \\
		=& \lambda^2_k\|w^*-u^*\|^2+(1-\lambda_k)^2\|w_k-u^*\|^2+2\lambda_k(1-\lambda_k)\langle w^*-u^*, w_k-u^*\rangle.
	\end{align}
	Once again, by \eqref{eq2.1} we have   
	\begin{align}\label{eq3.24}
		\|z_k-u_{k+1}\|^2=& \lambda_k^2\|w^*-u_{k+1}\|^2+(1-\lambda_k)^2\|w_k-u_{k+1}\|^2+2\lambda_K(1-\lambda_k)\langle w^*-u_{k+1}, w_k-u_{k+1}\rangle \notag \\
		\geq & \lambda_k^2\|x_{k+1}-w^*\|^2+(1-\lambda_k)^2\|u_{k+1}-w_k\|^2-2\lambda_k(1-\lambda_k)\|u_{k+1}-w^*\|\|u_{k+1}-w_k\|\notag \\ 
		\geq & \lambda_k^2 \|u_{k+1}-w^*\|^2+(1-\lambda_K)^2\|u_{k+1}-w_k\|^2-2\lambda_k(1-\lambda_k)\Gamma \|u_{k+1}-w_k\|,
	\end{align}
	where $\Gamma=\sup\limits_{k\geq 1} \|u_{k+1}-w^*\|$. Note that this supremum exists because the sequence $\{u_k\}$ is bounded due to Lemma \ref{lm3.4}. 
	
	Putting \eqref{eq3.23} and \eqref{eq3.24} in \eqref{eq3.7} we have 
	\begin{align}\label{eq3.25} 
		&(1+2\gamma \mu_F+2\gamma\mu_G)\|u_{k+1}-u^*\|^2+2\gamma \langle G u_{k+1}-Gu_k, u^*-u_{k+1}\rangle \notag \\
		\leq & \lambda^2_k\|u^*-w^*\|+(1-\lambda_k)^2\|w_k-u^*\|^2+2\lambda_k(1-\lambda_k) \langle w^*-u^*, w_k-u^*\rangle \notag \\
		& - (\lambda^2_k\|u_{k+1}-w^*\|+(1-\lambda_k)^2\|u_{k+1}-w_k\|^2-2\lambda_k(1-\lambda_k)\Gamma \|u_{k+1}-w_k\|)\notag \\
		& +2\gamma (1-\lambda_k)\langle Gu_k-Gu_{k-1}, u^*-u_k\rangle\notag \\
		& +(1-\lambda_k)\gamma L(\|u_k-u_{k-1}\|^2+\|u_{k+1}-u_k\|^2)\notag \\ 
		\leq & (1-\lambda_k)\left(\|w_k-u^*\|^2+2\gamma \langle Gu_k-Gu_{k-1}, u^*-u_k\rangle \right) \notag \\
		& +\lambda_k (\lambda_k\|u^*-w^*\|^2+2(1-\lambda_k)\langle w^*-u^*, w_k-u^*\rangle +2(1-\lambda_k)\Gamma \|u_{k+1}-w_k\|)\notag \\
		&-(1-\lambda_k)^2\|u_{k+1}-w_k\|^2+(1-\lambda_k)\gamma L (\|u_k-u_{k-1}\|^2+\|u_{k+1}-u_k\|^2), \forall  k\geq k_0.
	\end{align} 
	Now plugging \eqref{eq3.12} and \eqref{eq3.17} in \eqref{eq3.25} we get 
	\begin{align*}
		&(1+2\gamma \mu_F+2\gamma\mu_G)\|u_{k+1}-u^*\|^2+2\gamma \langle Gu_{k+1}-Gu_k, u^*-u_{k+1}\rangle \notag \\
		\leq & (1-\lambda_k)\left[(1+\theta_1)\|u_k-u^*\|^2-(\theta_1-\theta_2)\|u_{k-1}-u^*\|^2 -\theta_2\|u_{k-2}-u^*\|^2 \right.\notag \\
		&+(1+\theta_1)(\theta_1-\theta_2)\|u_k-u_{k-1}\|^2 +\theta_2 (1+\theta_1 )\|u_k-u_{k-2}\|^2\notag\\
		&\left. -\theta_2(\theta_1-\theta_2) \|u_{k-1}-u_{k-2}\|^2+2\gamma \langle Gu_k-Gu_{k-1}, u^*-u_k\rangle \right]\notag \\
		&+\lambda_k(\lambda_k(u^*-w^*\|^2+2(1-\lambda_k)\langle w^*-u^*, w_k-u^*\rangle +2(1-\lambda_k)\Gamma \|u_{k+1}-w_k\|)\notag \\
		& -(1-\lambda_k)^2\left[(1-\theta_1-|\theta_2|)\|u_{k+1}-u_k\|^2+(\theta_1^2-\theta_1 -\theta_1|\theta_2|)\|u_k-u_{k-1}\|^2\right.\notag \\
		&\left. +(\theta_2^2-|\theta_2|-\theta_1|\theta_2|)\|u_{k-1}-x_{k-2}\|^2\right]+ (1-\lambda_k)\gamma L (\|u_k-u_{k-1}\|^2+\|u_{k+1}-u_k\|^2).
	\end{align*}
	It follows that 
	\begin{align}\label{nam1} 
		&(1+2\gamma \mu_F+2\gamma\mu_G)\|u_{k+1}-u^*\|^2-\theta_1\|u_k-u^*\|^2-\theta_2\|u_{k_1}-u^*\|^2+2\gamma \langle Gu_{k+1}-Gu_k, u^*-u_{k+1}\rangle \notag \\
		&+(1-|\theta_2|-\theta_1-\gamma L)\|u_{k+1}-u_k\|^2\notag \\
		\leq & (1-\lambda_k)\Big[ \|u_k-u^*\|^2-\theta_1\|u_{k-1}-u^*\|^2-\theta_2\|u_{k-2}-u^*\|^2+2\gamma \langle Gu_k-Gu_{k-1}, u^*-u_k\rangle\Big.\notag \\
		&\Big. +(1-|\theta_2|-\theta_1 -\gamma L)\|u_k-u_{k-1}\|^2\Big] \notag \\
		&+\lambda_k\Big(\lambda_k\|u^*-w^*\|+2(1-\lambda_k)\langle w^*-u^*, w_k-u^*\rangle +2(1-\lambda_k)\Gamma \|u_{k+1}-w_k\|\Big)\notag \\
		&+ (1-\lambda_k) \Big[2\gamma L+2\theta_1-\theta_2 -\theta_1\theta_2 -1+|\theta_2|+\theta_1^2-(1-\lambda_k)(\theta_1^2-\theta_1 -\theta_1|\theta_2|) \Big].
	\end{align}
	Also,
	\begin{align}\label{nam2} 
		&\|u_k-u_{k-1}\|^2 +(1-\lambda_k)\Big[\theta_2^2-\theta_2\theta_1 -(1-\lambda_k)(\theta_2^2-|\theta_2|-\theta_1|\theta_2|\Big] \|u_{k-1}-u_{k-2}\|^2 \notag \\
		=& (1-\lambda_k)\Bigg[ \|u_k-u^*\|^2-\theta_1 \|u_{k-1}-u^*\|^2-\theta_2 \|u_{k-2}-u^*\|^2+2\gamma \langle Gu_k-Gu_{k-1}, u^*-u_k\rangle \Big. \notag \\
		&+ (1-|\theta_2|-\theta_1-\gamma L)\|u_k-u_{k-1}\|^2\notag  \\
		& -\Big(2\gamma L+2\theta_1 -\theta_2-\theta_1\theta_2 -1+|\theta_2|+\theta_1^2-(1-\lambda_k)(\theta_1^2-\theta_1-\theta_1|\theta_2|)\Big) \notag \\
		&\Bigg.\Big(\|u_{k-1}-u_{k-2}\|^2-\|u_k-u_{k-1}\|^2\Big)\Bigg] \notag \\
		&+\lambda_k\Big(\lambda_k(\|u^*-w^*\|+2(1-\lambda_k)(w^*-u^*, w_k-u^*\rangle +2(1-\lambda_k)\Gamma \|u_{k+1}-w_k\|\Big) \notag \\
		& -(1-\lambda_k)\Big[1-2\theta_1-2\gamma L+\theta_2+2\theta_1\theta_2 -|\theta_2|-\theta_2^2-\theta_1^2+(1-\lambda_k)](\theta_1^2-\theta_1 -\theta_1|\theta_2|) \notag \\
		&\Big. +(1-\lambda_k)(\theta_2^2-|\theta_2|-\theta_1|\theta_2|)\Big]\|u_{k-1}-u_{k-2}\|^2, \quad \forall k\geq k_0.
	\end{align}
	Set \begin{align*}
		r_k=& -\big(2\gamma L +2\theta_1 -\theta_2 -\theta_1\theta_2 -1+|\theta_2|+\theta_1^2-(1-\lambda_k)(\theta_1^2-\theta_1 -\theta_1|\theta_2|)\big),\\
		a_k=& \lambda_k\|w^*-u^*\|^2+2(1-\lambda_k)\langle w^*-u^*, w_k-u^*\rangle +2(1-\lambda_k)\Gamma \|u_{k+1}-w_k\|,\\
		s_{k+1} = & (1+2\gamma \mu_F+2\gamma\mu_G)\|u_k-u^*\|^2-\theta_1\|u_{k-1}-u^*\|^2-\theta_2\|u_{k-2}-u^*\|^2\notag \\
        &+2\gamma \langle Gu_k-Gu_{k-1}, u^*-u_k\rangle +(1-|\theta_2|-\theta_1 -\gamma L)\|u_k-u_{k-1}\|^2+r_k\|u_{k-1}-u_{k-2}\|^2,\\
		q_k=& 1-2\theta_1 -2\gamma L +\theta_2 +2\theta_1\theta_2 -|\theta_2|-\theta_2^2-\theta_1^2+(1-\lambda_k)(\theta_1^2-\theta_1-\theta_1|\theta_2|)\\
		&+(1-\lambda_k)(\theta_2^2-|\theta_2|-\theta_1|\theta_2|).
	\end{align*}
	Then from \eqref{nam1} and \eqref{nam2} we derive 
	\begin{align}\label{eq3.26} 
		s_{k+1}\leq &(1-\lambda_k)s_k+\lambda_k a_k -(1-\lambda_k)q_k\|u_{k-1}-u_{k-2}\|^2 -(1-\lambda_k)(2\gamma \mu_F+2\gamma\mu_G)\|u_{k+1}-u^*\|^2 \notag \\
		\leq &(1-\lambda_k)s_k+\lambda_k a_k -(1-\lambda_k)q_k\|u_{k-1}-u_{k-2}\|^2, \forall k\geq k_0
	\end{align}
	By \eqref{eq3.20} one has $1-3\theta_1 -2\gamma L -|\theta_2|+\theta_2-\theta_1|\theta_2|+\theta_1\theta_2 >0.$ This implies that 
	\begin{align*}
		\lim\limits_{k\to \infty} r_k=&\lim\limits_{k\to \infty} -\Big(2\gamma L++2\theta_1 -\theta_2 -\theta_1\theta_2 -1+|\theta_2|+\theta_1^2-(1-\lambda_k)(\theta_1^2-\theta_1 -\theta_1|\theta_2|)\Big)\\
		=& 1-3\theta_1 -2\gamma L -|\theta_2|+\theta_2 -\theta_1|\theta_2|+\theta_1\theta_2 >0.
	\end{align*}
	So, there is $k_1\geq k_0$ such that $r_{k}>0 $ for all $k\geq k_1$. Moreover, from \eqref{eq3.21} one has 
	\begin{equation*}
		1-3\theta_1 -2\gamma L -2|\theta_2|+\theta_2 -2|\theta_2|\theta_1 +2\theta_2\theta_1 >0.
	\end{equation*}
	It follows that
	\begin{align*}
		\lim\limits_{k\to \infty} q_k=&\lim\limits_{k\to \infty} \Big(1-2\theta_1-2\gamma L+\theta_2+2\theta_1\theta_2 -|\theta_2|-\theta_2^2-\theta_1^2\\
	&+(1-\lambda_k)(\theta_1^2-\theta_1-\theta_1|\theta_2|)+(1-\lambda_k)(\theta_2^2-\theta_2-\theta_1|\theta_2|) \Big)\\
	=& 1-3\theta_1 -2\gamma L -2|\theta_2|+\theta_2 -2|\theta_2|\theta_1 +2\theta_2\theta_1 >0.
	\end{align*}
	Hence, there exists $k_2\geq k_0$ such that $q_k>0$ for all $k\geq k_2$. As a result, 
	\begin{align}\label{eq3.27} 
		s_{k+1}\leq (1-\lambda_k)s_k+\lambda_ka_k, \forall k\geq k_2.
	\end{align}
	Let $s_{k_i}$ be the sequence satisfying that $\lim\limits_{i\to \infty} (s_{k_i+1}-s_{k_i})\geq 0.$ 	
	Then \eqref{eq3.26} implies that 
	\begin{align*}
		\limsup_{n\to\infty}& \Big((1-\lambda_k)q_{k_i}\|u_{k_i-1}-u_{k_i-2}\|^2\Big)\\
		\leq & \limsup_{i\to \infty} \big((s_{k_i}-s_{k_i+1})+\lambda_{k_i}(q_{k_i}-s_{k_i})\big)\\
		\leq & -\liminf_{i\to \infty} (\big(s_{k_i+1}-s_{k_i})\leq 0.
	\end{align*}
	Because $\lim\limits_{i\to\infty}(1-\lambda_{k_i})q_{k_i}>0$, it holds that 
	\begin{equation}\label{eq3.28} 
		\lim\limits_{i\to\infty} \|u_{k_i-1}-u_{k_i-2}\|=0=\lim\limits_{i\to\infty} \|u_{k_i+1}-u_{k_i}\|.
	\end{equation}
	So 
	\begin{align}\label{eq3.29} 
		\lim\limits_{i\to \infty} \|w_{k_i}-u_{k_i}\|=\lim\limits_{i\to\infty} \|\theta_1 (u_{k_i}-u_{k_i-1})+\theta_2(u_{k_i-1}-u_{k_i-2})\|=0.
	\end{align}
	By \eqref{eq3.28} and \eqref{eq3.29} we get 
	\begin{align}\label{eq3.30} 
		\lim\limits_{i\to\infty} \|z_{k_i}-w_{k_i}\|=\lim\limits_{i\to\infty} \lambda_{k_i}\|w^*-w_{k_i}\|=0.
	\end{align}
	Because $\lim\limits_{k\to\infty} \lambda_k =0$ it follows that 
	\begin{align}\label{eq3.31} 
		\lim\limits_{i\to\infty} \|z_{k_i}-w_{k_i}\|=\lim\limits_{i\to\infty} \lambda_{k_i}\|w^*-w_{k_i}\|=0.
	\end{align}
	By \eqref{eq3.30} and \eqref{eq3.31} one has 
	\begin{equation}\label{eq3.32} 
		\lim\limits_{i\to\infty} \|z_{k_i}-u_{k_i+1}\|=0.
	\end{equation}
	Because of \eqref{eq3.28} and the Lipschit continuity of $G$, we get
	\begin{equation}\label{eq3.33} 
		\lim\limits_{i\to\infty} \|Gu_{k_i+1}-Gu_{k_i}\|=0.
	\end{equation}
	Due to Lemma \ref{lm3.4}, the sequence $\{u_{k_i}\}$ is bounded. Hence,  there exists a subsequence $\{u_{k_{i_j}}\} $ of $\{u_{k_i}\}$ which converges to $\bar{u}\in H$, and
	\begin{align}\label{eq3.34} 
		\limsup\limits_{i\to\infty} \langle w^*-u^*, u_{k_i}-u^*\rangle =\lim\limits\langle w^*-u^*, u_{k_{i_j}}-u^*\rangle =\langle w^*-u^*, \bar{u}-u^*\rangle. 
	\end{align}
	Let $(x, y)\in \gr(F+G)$. Then $\gamma (y-Gx)\in \gamma Fx$. Due to this, \eqref{eq3.3} and the $\mu_F$-monotone of $F$, we have that
	\begin{align*}
		\langle \gamma (y-Gx)-z_{k_{i_j}} +\gamma Gu_{k_{i_j}} +\gamma (1-\lambda_{k_{i_j}})(Gu_{k_{i_j}}-Gu_{k_{i_j}-1})+u_{k_{i_j}+1}, x-u_{k_{i_j}+1}\rangle \geq \gamma \mu_F\|u_{k_{i_j}+1}-x\|^2. 
	\end{align*}
	Moreover, because of the $\mu_G$-monotone of $G$, it holds that 
	\begin{align}\label{eq3.35} 
	&	\langle y, x-u_{k_{i_j}+1}\rangle \geq  \dfrac{1}{\gamma} \langle \gamma Gx+z_{k_{i_j}}-\gamma Gu_{k_{i_j}}-\gamma (1-\lambda_{k_{i_j}})  \notag \\
		& (Gu_{k_{i_j}}-Gu_{k_{i_J}-1})-u_{k_{i_j}+1}, x-u_{k_{i_j}+1}\rangle+\mu_F\|u_{k_{i_j}}-x\|^2 \notag \\
		=& \langle Gx-Gu_{k_{i_j}+1}, x-u_{k_{i_j}+1}\rangle +\langle Gu_{k_{i_j}+1}-Gu_{k_{i_j}}, x-u_{k_{i_j}+1}\rangle \notag \\
		& +(1-\lambda_{k_{i_j}})\langle Gu_{k_{i_j}-1}-Gu_{k_{i_j}}, x-u_{k_{i_j}+1}\rangle +\dfrac{1}{\gamma} \langle z_{k_{i_j}}-u_{k_{i_j}+1}, x-u_{k_{i_j}+1}\rangle +\mu_F\|u_{k_{i_j}}-x\|^2 \notag  \\
		& \geq \langle Gu_{k_{i_j}+1}-Gu_{k_{i_j}}, x-u_{k_{i_j}+1}\rangle +(1-\lambda_{k_{i_j}})\langle Gu_{k_{i_j}-1}-Gu_{k_{i_j}}, x-u_{k_{i_j}+1}\rangle \notag \\ 
        &+\dfrac{1}{\gamma} \langle z_{k_{i_j}}-u_{k_{i_j}+1}, x-u_{k_{i_j}+1}\rangle+\mu_G \|x-u_{k_{i_j}+1}\|^2+\mu_F\|u_{k_{i_j}+1}-x\|^2\notag \\
        =& \langle Gu_{k_{i_j}+1}-Gu_{k_{i_j}}, x-u_{k_{i_j}+1}\rangle +(1-\lambda_{k_{i_j}})\langle Gu_{k_{i_j}-1}-Gu_{k_{i_j}}, x-u_{k_{i_j}+1}\rangle \notag \\ &+\dfrac{1}{\gamma} \langle z_{k_{i_j}}-u_{k_{i_j}+1}, x-u_{k_{i_j}+1}\rangle+(\mu_G+\mu_F) \|x-u_{k_{i_j}+1}\|^2\notag \\
        \geq & \langle Gu_{k_{i_j}+1}-Gu_{k_{i_j}}, x-u_{k_{i_j}+1}\rangle +(1-\lambda_{k_{i_j}})\langle Gu_{k_{i_j}-1}-Gu_{k_{i_j}}, x-u_{k_{i_j}+1}\rangle \notag \\ 
        &+\dfrac{1}{\gamma} \langle z_{k_{i_j}}-u_{k_{i_j}+1}, x-u_{k_{i_j}+1}\rangle
	\end{align}
    The last inequality holds by condition \eqref{dk5 gt1}. 
	As $j\to\infty$ in \eqref{eq3.35} with using \eqref{eq3.32} and \eqref{eq3.33} we get 
	\begin{equation*}
		 \langle y, x-\bar{u}\rangle \geq 0. 
	\end{equation*}
	Due to Lemma \ref{lm2.2}, $F+G$ is maximal $ \mu_F+\mu_G$-monotone. Hence $\bar{u}\in \zer(F+G)$ by Lemma \ref{max sum}. 
	\\
Because
	$u^*={\mbox{\rm zer}{(F+G)}} w^*$, \eqref{eq3.34} and the characterization of the metric projection imply that 
	\begin{equation}\label{eq3.36} 
		\limsup\limits_{i\to\infty} w^*-u^*, u_{k_i}-u^*=\langle w^*-u^*, \bar{u}-u^*\rangle \leq 0.
	\end{equation}
By \eqref{eq3.29}, \eqref{eq3.30} and \eqref{eq3.36} we get $\limsup\limits_{i\to\infty} s_{k_i}\leq 0$. Hence by condition $\sum\limits_{k=1}^\infty \lambda_k=0$, Lemma \ref{lm2.1} and \eqref{eq3.27} we obtain that
\begin{equation*} 
	\lim\limits_{k\to\infty} s_{k}=0.
	\end{equation*} 
	Because of this fact and \eqref{eq3.22} we conclude that $\{u_k\}$ converges to $u^*={\mbox{\rm zer}{(F+G)}} w^*$, as claimed. 
\end{proof}

	\section{Applications}\label{sec5}
	In this section, we treat problems such as constrained optimization problems (COPs), mixed variational inequalities (MVIs), and variational inequalities (VIs) as special cases of problem \eqref{inc1}. Based on this formulation, we derive a two-step inertial forward-reflected-anchored-backward splitting algorithm for these problems. Under appropriate assumptions on the operators, the sequences generated by the corresponding algorithms are shown to converge strongly. 
	\subsection{Application to COP }
	Consider the constrained optimization problem (COP)
	\begin{equation}\label{cop1}
		\min_{u\in H} f(u)+g(u)
	\end{equation}
	where \(g: H\longrightarrow \R\) is continuous differential and convex, and \(f: H\longrightarrow \R\) is a proper, l.s.c. convex real value function. Note that   \(f\) may lack differentiability, and when \(f\equiv 0\),   problem \eqref{cop1} simplifies to an unconstrained optimization problem. As mentioned earlier, this problem can be reformulated as an inclusion problem \eqref{inc1} with  $F=\partial f$ and $G=\nabla g$. 	Then  \eqref{eqalg} will be reduced to the following 
    \begin{align}\label{eqalg2}
          u_{k+1}=&\mbox{prox}_{\gamma f} (\lambda_k w^*+(1-\lambda_k)\big(u_k-\theta_1(u_k-u_{k-1})+\theta_2(u_{k-1}-u_{k-2}) \notag \\
    & -\gamma \nabla g u_k -\gamma (1-\lambda_k)(\nabla g u_k-\nabla g u_{k-1})\big), \quad \forall k\geq 1. 
    \end{align}

We make the following assumption on the function $g$.
\begin{assumption}
    The function $g$ is convex and $\nabla g$ is Lipschitz continuous with constant $L>0$, and $\gamma \in \left(0, \dfrac{1}{2L}\right)$.
\end{assumption}
	
	Observe that when $g$ is convex,   $\nabla g$ is monotone. In addition, \(F=\partial f\) is maximal monotone \cite{ROCK}.   Therefore, the conditions related to the operators in Assumption \ref{gt1} hold. Consequently, if the parameters $\theta_1, \theta_2$ are chosen to satisfy condition \eqref{dk4 gt1},  the sequence generated by \eqref{eqalg2} strongly converges to the solution of the COP.

	\subsection{Application to MVIP}
	Now we examine the mixed variational inequality problem (MVIP):
	\begin{equation}\label{mvip1}
		\mbox{Find \(u^*\in H\) such that \(\langle T(u^*), u-u^*\rangle +f(u^*)-f(u)\geq 0\) for all $u\in H$,}
	\end{equation}
	where $T: H\longrightarrow H$ is a vector-valued operator and $f: H\longrightarrow \R$ is a proper, l.s.c. convex function. Once again, this problem can be rewritten as an inclusion problem of the form \eqref{inc1} with $G=T$ and $F = \partial f$. Observe that when $F = \partial f$ we have   
	$$J_{\gamma F}(u)=\mbox{prox}_{\gamma f}(u).$$
    Hence \eqref{eqalg} reduces to the following 
      \begin{align}\label{eqalg3}
          u_{k+1}=&\mbox{prox}_{\gamma f} (\lambda_k w^*+(1-\lambda_k)\big(u_k-\theta_1(u_k-u_{k-1})+\theta_2(u_{k-1}-u_{k-2}) \notag \\
    & -\gamma T u_k -\gamma (1-\lambda_k)(T u_k-T u_{k-1})\big), \quad \forall k\geq 1. 
    \end{align} 
	In the case where $f$ is convex,  $T$ is  monotone and $L$-Lipschitz  continuous,  and $\gamma\in (0, \frac{1}{2L})$, then all conditions related to the operators in Assumption \ref{gt1}  hold. As a result, the sequence generated by \eqref{eqalg3} converges strongly to the solution of problem \ref{mvip1} with appropriate values of the parameters. 
	
\subsection{Application to VIP}
	We now consider a special case of problem \ref{mvip1} where $f=0$. This problem is known as the variational inequality problem and is stated as follows.	
	\begin{equation}\label{vip1}
		\mbox{Find \(u^*\in C\) such that \(\langle T(u^*), u-u^*\rangle  \geq 0, \quad \forall u\in C\),}
	\end{equation} 
 where \(C\) is a closed convex subset of \(H\), $F: C\longrightarrow H$ is an operator. 	We denote this problem by $\mbox{VIP}(F, C)$. 
	
The variational inequality problems (VIPs) in \eqref{vip1} can be reformulated as an inclusion problem of the form  $0\in (F+G)(x)$ with $G=T$ and $F=N_C$. Again, in this case, we have that 
 $$J_{\gamma F}(u)  =P_C(u) .$$	Hence the algorithm \eqref{eqalg} reads as. 
	 \begin{align}\label{eqalg4}
          u_{k+1}=&P_C  (\lambda_k w^*+(1-\lambda_k)\big(u_k-\theta_1(u_k-u_{k-1})+\theta_2(u_{k-1}-u_{k-2}) \notag \\
    & -\gamma T u_k -\gamma (1-\lambda_k)(T u_k-T u_{k-1})\big), \quad \forall k\geq 1. 
    \end{align} 
	 When the operator \(T\) satisfies the following assumption: 
     \begin{assumption}
         $T$ is monotone and Lipschitz continuous with constant \(L>0\),
     \end{assumption}
\noindent Then, all conditions in Assumption \ref{gt1} related to the operators also hold. Once again, the sequence generated by \eqref{eqalg4} converges strongly to the solution of problem \eqref{vip1}.

\section{conclusion} \label{sec6}

We have established the strong convergence of a sequence generated by the two-step inertial forward–reflected–anchored–backward splitting algorithm for solving the non-monotone inclusion problem \eqref{inc1} in a real Hilbert space. Additionally, we provide a characterization of when a generalized monotone operator becomes maximal. Moreover, we present conditions that ensure the maximality of the sum of two generalized monotone operators. Finally, we discuss applications to related problems, including constrained optimization, mixed variational inequality problems, and variational problems.


\end{document}